\numberwithin{equation}{section}
\newtheorem{theorem}[equation]{Theorem}
\newtheorem{lemma}[equation]{Lemma}
\newtheorem{proposition}[equation]{Proposition}
\newtheorem{corollary}[equation]{Corollary}
\theoremstyle{definition}
\theoremstyle{remark}
\newtheorem{remark}[equation]{Remark}
\newcommand{\ob}{\operatorname{ob}}
\newcommand{\mor}{\operatorname{mor}}
\newcommand{\im}{\operatorname{im}}
\newcommand{\coker}{\operatorname{coker}}
\begin{document}

\title{A Note on Homology over Functor Categories}
\author{Ged Corob Cook}
\address{Department of Mathematics\\
Royal Holloway, University of London\\
Egham\\
Surrey TW20 0EX\\
UK}
\email{Ged.CorobCook.2012@live.rhul.ac.uk}
\thanks{This work was done as part of the author's PhD at Royal Holloway, University of London, supervised by Brita Nucinkis.}
\subjclass[2010]{18G99, 18A25}
\keywords{homological algebra, functor category, derived functor}

\begin{abstract}

It is known that, for $C$ an abelian category and $I$ small, the functor category $C^I$ is again abelian; thus we can do homology in such categories, and examine how it relates to homology in $C$ itself. However, there does not seem to be any good reference collecting these ideas together. This article seeks to fill the gap by showing that homology in $C^I$ behaves as one would expect.

\end{abstract}

\maketitle

\section*{Introduction}

Functor categories arise in many situations in the wild. By the Yoneda lemma, for example, every locally small category $I$ has a full, faithful functor into $Set^I$. Indeed, the categories of presheaves on topological spaces arise in this way. A more concrete example is given by the category of left $G$-sets, for a group $G$: this can be identified with $Set^G$.

As stated in the abstract, given an abelian category $C$, we want to be able to use homology in the functor category $C^I$, which is again abelian by \cite[Functor Categories 1.6.4]{Weibel}, and compare it to homology in $C$. Given the ubiquity of such categories, one might expect to find some research on the subject. Indeed, work has been done for specific categories $I$: the Bredon cohomology of a group, for example, works in the category of functors from the orbit category $\mathcal{O}_\mathfrak{F}G$ to the category of $R$-modules for some ring $R$, where $G$ is a group, $\mathfrak{F}$ a family of subgroups, and $\mathcal{O}_\mathfrak{F}G$ is the category of $G$-spaces of the form $G/H$, $H \in \mathfrak{F}$, with $G$-maps between them. A simpler example is ordinary cohomology over a ring $R$: thinking of $G$ as a category with one object, whose morphisms are left-multiplication by elements of $G$, the category of $R[G]$-modules consists of functors from $G$ to the category of $R$-modules. In addition it is worth mentioning that $R$-modules themselves can be thought of as functors from $R$ to $Ab$, the category of abelian groups, except that in this case $R$ and the functors must be enriched over $Ab$ -- in the language of modules, this ensures that scalar multiplication distributes over addition in $R$.

On the other hand, there are basic facts which we can establish even without knowing anything about $I$. It is surprising that this has not been done before. However, we have been unable to find it anywhere, and thus we hope to give a good reference for future applications in homology theory. In particular, such a reference is needed for the author's forthcoming work, \cite{Myself2}. 

So, in Section \ref{funcat}, we define functor categories and show some basic properties, in particular that functors $F\colon C \rightarrow D$ induce functors $F^I\colon C^I \rightarrow D^I$, before looking specifically at functor categories over abelian categories, and showing in Lemma \ref{les} that there is a nice way of characterising exact sequences in such categories. This section is foundational: a lot of it can be found in \cite[Section 2.1]{risingc}.

In Section \ref{homdfun} we apply this framework to the derived functors of additive functors between abelian categories. We show that the functors $C^I \rightarrow D^I$ induced by the derived functors of $F$ form a homological $\delta$-functor, and are naturally isomorphic to the derived functors of $F^I$. Moreover, we show that there is a Grothendieck spectral sequence of such derived functors.

Finally, in Section \ref{hombifun} we consider the situation of bifunctors $F^{I \times J}\colon C^I \times D^J \rightarrow E^{I \times J}$ where $C, D$ and $E$ are abelian categories. In the case where, for example, $C$ has enough projectives but $D$ does not, a little more care is needed to show the existence of long exact sequences in both variables.

\section{Functor Categories}
\label{funcat}

Given a category $C$ and a small category $I$, one can construct a category whose objects are the functors $I \rightarrow C$, and whose morphisms are the natural transformations between these functors. Such a category is called a functor category, and written $C^I$.

We can think of objects in $C^I$ as diagrams in $C$, that is, pairs
\[
(\{A^i \in \ob(C) : i \in \ob(I)\}, \{(\alpha^{ij}\colon A^i \rightarrow A^j) \in \mor(C) : (i \rightarrow j) \in \mor(I) \}).
\]
When it is clear, we may write $(\{A^i\}, \{\alpha^{ij}\})$ or just $\{A^i\}$ for this. Similarly, a morphism
\[
f\colon (\{A^i\},\{\alpha^{ij}\}) \rightarrow (\{B^i\},\{\beta^{ij}\})
\]
in $C^I$ consists of a set $\{f^i : i \in I\}$ of morphisms in $C$ such that for all $i,j \in I$ the square
\[
\xymatrix{A^i \ar[r]^{f^i} \ar[d]_{\alpha^{ij}} & B^i \ar[d]^{\beta^{ij}} \\
A^j \ar[r]^{f^j} & B^j}
\]
commutes. When it is clear, we may write $\{f^i : i \in I\}$, or just $\{f^i\}$, for $f$. We call the $A^i$s the \emph{components} of $\{A^i\}$ and the $f^i$s the \emph{components} of $f$.

Observe that we get an \emph{$i$th projection functor} $\pi^i\colon C^I \rightarrow C$ for each $i \in I$, which takes the $i$th component of objects and morphisms in $C^I$.

\begin{lemma}
\label{nattr'}
For each morphism $i \rightarrow j$ in $I$, we define $\gamma^{ij}\colon \pi^i \rightarrow \pi^j$ to be a collection of morphisms $\gamma^{ij}_A$ in $C$, for $A \in C^I$, where $\gamma^{ij}_A\colon \pi^i(A) \rightarrow \pi^j(A)$ is just the morphism $A^i \rightarrow A^j$ between the components of $A$. Then $\gamma^{ij}$ is a natural transformation.
\end{lemma}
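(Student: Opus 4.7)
The plan is to verify the naturality square directly from the definitions, since there is essentially no content to this lemma beyond unpacking what everything means.

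First I would recall that, to prove $\gamma^{ij}\colon \pi^i \to \pi^j$ is a natural transformation, one must show that for every morphism $f\colon A \to B$ in $C^I$ the square
\[
\xymatrix{\pi^i(A) \ar[r]^{\pi^i(f)} \ar[d]_{\gamma^{ij}_A} & \pi^i(B) \ar[d]^{\gamma^{ij}_B} \\
\pi^j(A) \ar[r]_{\pi^j(f)} & \pi^j(B)}
\]
commutes. Then I would substitute the concrete descriptions: writing $A = (\{A^i\},\{\alpha^{ij}\})$ and $B = (\{B^i\},\{\beta^{ij}\})$, we have $\pi^i(A) = A^i$ and $\pi^j(A) = A^j$, similarly for $B$, while $\pi^i(f) = f^i$ and $\pi^j(f) = f^j$ are just the components of the natural transformation $f$. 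The vertical maps $\gamma^{ij}_A$ and $\gamma^{ij}_B$ are by definition $\alpha^{ij}\colon A^i \to A^j$ and $\beta^{ij}\colon B^i \to B^j$.

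Thus the naturality square for $\gamma^{ij}$ at the morphism $f$ is literally the square
\[
\xymatrix{A^i \ar[r]^{f^i} \ar[d]_{\alpha^{ij}} & B^i \ar[d]^{\beta^{ij}} \\
A^j \ar[r]_{f^j} & B^j}
\]
that was already required to commute in the definition of a morphism in $C^I$. So the commutativity we need is immediate. There is no real obstacle here; the only thing to be careful about is making clear that the naturality condition for $\gamma^{ij}$ and the defining condition on morphisms of $C^I$ are the same square, simply read through the identifications $\pi^i(A) = A^i$ and $\gamma^{ij}_A = \alpha^{ij}$.
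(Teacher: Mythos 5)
Your proof is correct and follows exactly the same route as the paper's: write down the naturality square for $\gamma^{ij}$ at a morphism $f$ and observe that, under the identifications $\pi^i(A)=A^i$, $\pi^i(f)=f^i$, $\gamma^{ij}_A=\alpha^{ij}$, it is precisely the commuting square that defines $f$ as a morphism of $C^I$. Your version is slightly more explicit about those identifications, but there is no substantive difference.
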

\begin{proof}
Given a morphism $f\colon A \rightarrow B$ in $C^I$, we need to check the square
\[
\xymatrix{A^i \ar[r]^{f^i} \ar[d]_{\gamma^{ij}_A} & B^i \ar[d]^{\gamma^{ij}_B} \\
A^j \ar[r]^{f^j} & B^j}
\]
commutes. But this is just saying that $f$ is a morphism, so we are done.
\end{proof}

In this paper, $I$ will always be a small category.

Suppose now that $F\colon C \rightarrow D$ is a functor between categories $C$ and $D$. For a morphism
\[
f\colon (\{A^i\},\{\alpha^{ij}\}) \rightarrow (\{B^i\},\{\beta^{ij}\})
\]
in $C^I$, we have that
\[
\{F(f^i)\}\colon (\{F(A^i)\},\{F(\alpha^{ij})\}) \rightarrow (\{F(B^i)\},\{F(\beta^{ij})\})
\]
is a morphism in $D^I$ because the square
\[
\xymatrix{A^i \ar[r]^{f^i} \ar[d]_{\alpha^{ij}} & B^i \ar[d]^{\beta^{ij}} \\
A^j \ar[r]^{f^j} & B^j}
\]
commutes, so
\[
\xymatrix{F(A^i) \ar[r]^{F(f^i)} \ar[d]_{F(\alpha^{ij})} & F(B^i) \ar[d]^{F(\beta^{ij})} \\
F(A^j) \ar[r]^{F(f^j)} & F(B^j)}
\]
does too. It is clear from the definition that composition of morphisms is preserved by this. Thus we get the following results.

\begin{proposition}
Define $F^I\colon C^I \rightarrow D^I$ by the maps
\[
(\{A^i\},\{\alpha^{ij}\}) \mapsto (\{F(A^i)\},\{F(\alpha^{ij})\})
\]
and
\begin{align*}
((\{A^i\},\{\alpha^{ij}\}) &\rightarrow (\{B^i\},\{\beta^{ij}\})) \mapsto \\
((\{F(A^i)\},\{F(\alpha^{ij})\}) &\rightarrow (\{F(B^i)\},\{F(\beta^{ij})\})).
\end{align*}
Then $F^I$ is a functor, which we call the \emph{exponent} of $F$ by $I$.
\end{proposition}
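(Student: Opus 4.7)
The plan is to verify the two functor axioms for $F^I$, since the discussion immediately preceding the proposition already shows that the assignment on morphisms lands in $\mor(D^I)$ (the relevant naturality squares commute because $F$ preserves composition of the commuting squares in $C$). So there is nothing substantive left to check about morphisms being well-defined; what remains is to verify that the assignment on objects lands in $\ob(D^I)$, and then that identities and composites are preserved.

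For the object side, I would note that an object $(\{A^i\},\{\alpha^{ij}\})$ of $C^I$ is by definition a functor $I \to C$, and the candidate image $(\{F(A^i)\},\{F(\alpha^{ij})\})$ is a functor $I \to D$ precisely because $F$ is a functor: $F(\alpha^{ii}) = F(\mathrm{id}_{A^i}) = \mathrm{id}_{F(A^i)}$ when $\alpha^{ii}$ is an identity, and $F(\alpha^{jk}) \circ F(\alpha^{ij}) = F(\alpha^{jk} \circ \alpha^{ij}) = F(\alpha^{ik})$ on composable pairs. So the assignment is postcomposition with $F$, which takes functors to functors.

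For the two functoriality axioms of $F^I$ itself, I would observe that composition and identities in $C^I$ are defined componentwise: the identity on $(\{A^i\},\{\alpha^{ij}\})$ has components $\mathrm{id}_{A^i}$, and composition $\{g^i\}\circ\{f^i\}$ has components $g^i \circ f^i$. Applying $F$ componentwise and using that $F$ preserves identities and composition then gives $F^I(\mathrm{id}) = \mathrm{id}$ and $F^I(\{g^i\} \circ \{f^i\}) = F^I(\{g^i\}) \circ F^I(\{f^i\})$ directly.

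There is no real obstacle here: the proposition is essentially the observation that a natural transformation of functors $I \to C$ is the same data as a commuting family of morphisms, and that postcomposing each component with the functor $F$ preserves all the relevant equations. The argument is a one-line reduction to functoriality of $F$ at each component.
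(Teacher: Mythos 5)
Your proposal is correct and follows the same route as the paper, whose ``proof'' is the discussion immediately preceding the proposition: check that the image of a morphism is again a morphism via the commuting squares, and reduce preservation of identities and composites to the componentwise functoriality of $F$. You are slightly more explicit than the paper about the object side and the identity axiom (the paper dismisses these as clear from the definition), but there is no substantive difference.
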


\begin{lemma}
\label{nattr}
Given functors $F,G\colon C \rightarrow D$ and a natural transformation $\eta\colon F \rightarrow G$, we get a natural transformation
\[
\eta^I\colon F^I \rightarrow G^I,
\]
where, for each $A \in C^I$,
\[
\eta^I_A\colon F^I(A) \rightarrow G^I(A)
\]
is the map with $i$th component
\[
\eta_{A^i}\colon F(A^i) \rightarrow G(A^i).
\]
\end{lemma}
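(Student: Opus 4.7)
The plan is to verify two things: first, that for each $A \in C^I$ the proposed $\eta^I_A$ is a bona fide morphism in $D^I$ (i.e.\ its components really do commute with the structure maps of $F^I(A)$ and $G^I(A)$), and second, that the collection $\{\eta^I_A\}_{A \in C^I}$ is natural in $A$. Both checks reduce directly to the naturality of the given $\eta$ in the ambient category $D$, applied componentwise.

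First I would fix $A = (\{A^i\},\{\alpha^{ij}\}) \in C^I$ and a morphism $i \to j$ in $I$. The structure map of $F^I(A)$ is $F(\alpha^{ij})$ and that of $G^I(A)$ is $G(\alpha^{ij})$, so the square I need is
\[
\xymatrix{F(A^i) \ar[r]^{\eta_{A^i}} \ar[d]_{F(\alpha^{ij})} & G(A^i) \ar[d]^{G(\alpha^{ij})} \\
F(A^j) \ar[r]^{\eta_{A^j}} & G(A^j),}
\]
which commutes because it is precisely the naturality square for $\eta$ applied to the morphism $\alpha^{ij}\colon A^i \to A^j$ in $C$. This shows that $\eta^I_A$ is a morphism in $D^I$.

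Next I would verify naturality in $A$: given $f\colon A \to B$ in $C^I$, I must show $\eta^I_B \circ F^I(f) = G^I(f) \circ \eta^I_A$. Since morphisms in $D^I$ are determined by their components, it suffices to check this equality componentwise. At index $i$, both sides reduce to the outer paths of the naturality square for $\eta$ applied to $f^i\colon A^i \to B^i$, and this square commutes by hypothesis on $\eta$.

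No step here is a real obstacle; the entire argument is just the observation that naturality in $C^I$ is naturality in $C$ checked one component at a time, and the $I$-indexed coherence is automatic because $\eta$ is already natural on all of $C$. The only thing to be careful about is to keep the two levels of naturality distinct (between $F^I(A)$ and $G^I(A)$ as diagrams, versus between $F^I$ and $G^I$ as functors), but both come from exactly the same source.
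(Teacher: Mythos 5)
Your proposal is correct and follows exactly the same two-step argument as the paper: first checking that $\eta^I_A$ is a morphism in $D^I$ via the naturality square for $\eta$ at $\alpha^{ij}$, then verifying naturality of $\eta^I$ componentwise via the naturality square for $\eta$ at $f^i$. Nothing is missing.
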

\begin{proof}
To show that each $\eta^I_A$ is a morphism in $D^I$, we need to check that the squares
\[
\xymatrix{F(A^i) \ar[r]^{\eta_{A^i}} \ar[d]_{F(\alpha^{ij})} & G(A^i) \ar[d]^{G(\alpha^{ij})} \\
F(A^j) \ar[r]^{\eta_{A^j}} & G(A^j)}
\]
commute, which holds because $\eta$ is a natural transformation. To show $\eta^I$ is a natural transformation, it remains to show that, for a morphism $f\colon A \rightarrow B$ in $C^I$, the squares
\[
\xymatrix{F^I(A) \ar[r]^{F^I(f)} \ar[d]_{\eta^I_A} & F^I(B) \ar[d]^{\eta^I_B} \\
G^I(A) \ar[r]^{G^I(f)} & G^I(B)}
\]
commute; it suffices to show that each component commutes, which is just another application of the naturality of $\eta$.
\end{proof}

Given a categories $C,D$, and a functor $F\colon C \rightarrow D^I$, we will frequently write $F^i$ for the composite $\pi^i F$.

\begin{lemma}
\label{nattr''}
Given functors $F,G\colon C \rightarrow D^I$ and a natural transformation $\eta\colon F \rightarrow G$, we get a natural transformation
\[
\eta^i\colon F^i \rightarrow G^i,
\]
where, for each $A \in C$,
\[
\eta^i_A\colon F^i(A) \rightarrow G^i(A)
\]
is the $i$th component of
\[
\eta_A\colon F(A) \rightarrow G(A).
\]
\end{lemma}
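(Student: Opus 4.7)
The plan is to recognise that $\eta^i$ is just the whiskering of $\eta$ by the projection functor $\pi^i \colon D^I \to D$, so the claim reduces to the standard fact that post-composing a natural transformation with a functor yields a natural transformation. Concretely, I would first observe that $F^i = \pi^i F$ and $G^i = \pi^i G$ are functors $C \to D$ as composites of functors, and that the stated formula $\eta^i_A = $ (the $i$th component of $\eta_A$) is precisely $\pi^i(\eta_A)$, so $\eta^i$ is well-defined as a family of morphisms in $D$.

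For the naturality square, I would start from the naturality of $\eta$: given $f\colon A \to B$ in $C$, the square
\[
\xymatrix{F(A) \ar[r]^{F(f)} \ar[d]_{\eta_A} & F(B) \ar[d]^{\eta_B} \\
G(A) \ar[r]^{G(f)} & G(B)}
\]
commutes in $D^I$. Applying the functor $\pi^i$ to the whole square produces the square
\[
\xymatrix{F^i(A) \ar[r]^{F^i(f)} \ar[d]_{\eta^i_A} & F^i(B) \ar[d]^{\eta^i_B} \\
G^i(A) \ar[r]^{G^i(f)} & G^i(B)}
\]
in $D$, which still commutes because $\pi^i$ preserves composition. This is exactly the naturality square for $\eta^i$ at $f$, finishing the proof.

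There is essentially no obstacle here: the result is a formal consequence of the functoriality of $\pi^i$ together with the definitions in Lemma \ref{nattr'}. The only thing to be careful about is the bookkeeping identification $\eta^i_A = \pi^i(\eta_A)$, but this is built into the statement of the lemma. In particular, nothing in the argument requires $C$ or $D$ to have any additional structure.
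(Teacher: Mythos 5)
Your proof is correct and is essentially the argument the paper intends: its own proof reads only ``Similar to the previous lemmas,'' meaning a componentwise check, and applying the functor $\pi^i$ to the naturality square of $\eta$ is precisely that check, packaged as whiskering by $\pi^i$.
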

\begin{proof}
Similar to the previous lemmas.
\end{proof}

A \emph{preadditive} category is a category enriched over the category $Ab$ of abelian groups; that is, one in which every set of morphism $\mor(A,B)$ is an abelian group, such that composition of morphisms is distributive over addition. In other words, a morphism $B \rightarrow B'$ induces a group homomorphism $\mor(A,B) \rightarrow \mor(A,B')$, and a morphism $A' \rightarrow A$ induces a group homomorphism $\mor(A,B) \rightarrow \mor(A',B)$. Over preadditive categories we can define additive functors: functors $F$ enriched over $Ab$, so that $\mor(A,B) \rightarrow \mor(F(A),F(B))$ is a group homomorphism.

An \emph{additive} category is a preadditive category that has a zero object (that is, an object that is both terminal and initial in its category) and pairwise products. It can be shown that this implies the pairwise products are also pairwise coproducts, so we call these biproducts. Over additive categories we can define kernels and cokernels of morphisms; see \cite[Appendix A]{Weibel} again.

An \emph{abelian} category is an additive category such that every morphism has a kernel and a cokernel, every monomorphism is the kernel of its cokernel, and every epimorphism is the cokernel of its kernel. Examples of abelian categories include the category of abelian groups, and the category of $R$-modules for a ring $R$. In abelian categories we can talk about chain complexes, i.e. sequences
\[
\cdots \rightarrow A_2 \xrightarrow{f_2} A_1 \xrightarrow{f_1} A_0 \rightarrow \cdots
\]
such that $f_{i-1} f_i = 0$ for each $i$, and exact sequences, i.e. chain complexes such that $\im (f_i) = \ker (f_{i-1})$ for each $i$.

\begin{lemma}
If $C$ is a preadditive category, so is $C^I$.
\end{lemma}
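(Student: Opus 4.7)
The plan is to transfer the enrichment of $C$ over $Ab$ to $C^I$ componentwise. Given $A, B \in C^I$ and two morphisms $f, g\colon A \to B$ with components $\{f^i\}$ and $\{g^i\}$, I would define $f + g$ to be the collection of morphisms $\{f^i + g^i\}$ in $C$, where the sum on the right is taken in the abelian group $\mor_C(A^i, B^i)$. Similarly, the zero morphism $0\colon A \to B$ in $C^I$ is defined to have the zero morphism $A^i \to B^i$ in each component.

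First I would check that $f + g$, so defined, really is a morphism in $C^I$, i.e. that for each $i \to j$ in $I$ the naturality square
\[
\xymatrix{A^i \ar[r]^{f^i + g^i} \ar[d]_{\alpha^{ij}} & B^i \ar[d]^{\beta^{ij}} \\
A^j \ar[r]^{f^j + g^j} & B^j}
\]
commutes. This reduces to showing $\beta^{ij}(f^i + g^i) = (f^j + g^j)\alpha^{ij}$, which is immediate from the bilinearity of composition in $C$ together with the fact that $f$ and $g$ are individually morphisms in $C^I$. The zero morphism is checked similarly (trivially).

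Next I would verify that with this operation $\mor_{C^I}(A, B)$ is an abelian group: associativity, commutativity, existence of a zero element, and existence of inverses all hold componentwise, and so they follow directly from the corresponding properties in each $\mor_C(A^i, B^i)$. The inverse of $\{f^i\}$ is of course $\{-f^i\}$, which lies in $C^I$ by the same argument as for $f + g$.

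Finally I would check that composition in $C^I$ is bilinear. For morphisms $f, g\colon A \to B$ and $h\colon B \to B'$ in $C^I$, we have $(h(f+g))^i = h^i(f^i + g^i) = h^i f^i + h^i g^i = (hf)^i + (hg)^i$, using bilinearity in $C$; distributivity on the other side is analogous. None of the steps is an obstacle — the content of the lemma is just that all the abelian-group structure on $C^I$ can be read off componentwise, and the only thing to verify in passing is that sums of natural transformations are again natural, which is where bilinearity of composition in $C$ enters.
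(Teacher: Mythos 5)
Your proof is correct and follows essentially the same componentwise route as the paper: define addition and the zero morphism on components, verify the naturality squares using the bilinearity of composition in $C$, and inherit the group axioms from each $\mor_C(A^i,B^i)$. The only (welcome) addition is that you explicitly verify bilinearity of composition in $C^I$, which the paper leaves implicit.
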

\begin{proof}
Suppose
\[
f,g\colon (\{A^i\},\{\alpha^{ij}\}) \rightarrow (\{B^i\},\{\beta^{ij}\})
\]
are two morphisms in $C^I$. We define
\[
f+g\colon (\{A^i\},\{\alpha^{ij}\}) \rightarrow (\{B^i\},\{\beta^{ij}\})
\]
by $(f+g)^i = f^i + g^i$. For this to be a morphism, we need the squares
\[
\xymatrix{A^i \ar[r]^{(f+g)^i} \ar[d]_{\alpha^{ij}} & B^i \ar[d]^{\beta^{ij}} \\
A^j \ar[r]^{(f+g)^j} & B^j}
\]
to commute. Now
\[
\beta^{ij} (f+g)^i = \beta^{ij} f^i + \beta^{ij} g^i = f^j \alpha^{ij} + g^j \alpha^{ij} = (f+g)^j \alpha^{ij},
\]
as required.

Write $e^i$ for the homomorphism $A^i \rightarrow B^i$ which is the identity in $Hom_C(A^i,B^i)$. Consider the square
\[
\xymatrix{A^i \ar[r]^{e^i} \ar[d]_{\alpha^{ij}} & B^i \ar[d]^{\beta^{ij}} \\
A^j \ar[r]^{e^j} & B^j \rlap{.}}
\]
Since composition distributes over addition in $C$, it follows that $e^j \alpha^{ij}$ and $\beta^{ij} e^i$ are both the identity of $Hom_C(A^i,B^j)$, so this square commutes. Hence $e=\{e^i\}$ is a morphism in $C^I$, and for any other morphism $f\colon\{A^i\}\rightarrow\{B^i\}$ in $C^I$, $e+f = \{e^i+f^i\} = \{f^i\} = f$, and similarly $f+e=f$, so $e$ is an identity element in $Hom_{C^I}(\{A^i\},\{B^i\})$.

One can show the existence of inverses similarly. Thus $C^I$ is preadditive.
\end{proof}

\begin{lemma}
\label{fun}
Suppose $C$ and $D$ are preadditive categories and $F\colon C \rightarrow D$ is additive. Then $F^I$ is additive.
\end{lemma}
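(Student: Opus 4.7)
The plan is to verify directly that $F^I$ induces a group homomorphism on each Hom set
\[
\mathrm{Hom}_{C^I}(A,B) \longrightarrow \mathrm{Hom}_{D^I}(F^I(A), F^I(B)).
\]
Since $F^I$ is already known to be a functor from the previous proposition, additivity is the only remaining point to check. Moreover, the addition in both $C^I$ and $D^I$ has just been defined componentwise, so the whole statement should reduce to a componentwise check and the additivity of $F$ itself.

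Concretely, I would take two parallel morphisms $f,g\colon A \to B$ in $C^I$ and compute the $i$-th component of $F^I(f+g)$. By the definition of $F^I$ it is $F\bigl((f+g)^i\bigr)$, which equals $F(f^i + g^i)$ by the componentwise definition of the sum in $C^I$. Since $F$ is additive on $\mathrm{Hom}_C(A^i,B^i)$, this equals $F(f^i) + F(g^i)$, which is precisely the $i$-th component of $F^I(f) + F^I(g)$ in $D^I$. As morphisms in $D^I$ are determined by their components, this gives $F^I(f+g) = F^I(f) + F^I(g)$, as required.

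There is no genuine obstacle here: the lemma is essentially a bookkeeping exercise, and its content is really just that the componentwise structure of $C^I$ and $D^I$ is compatible with componentwise application of an additive functor. The only thing worth emphasising in the write-up is that one is free to appeal to the componentwise description of addition in functor categories established in the previous lemma, so that no fresh analysis of the commutativity squares is needed.
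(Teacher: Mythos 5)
Your proposal is correct and follows essentially the same route as the paper: both arguments reduce the claim to a componentwise computation, using the componentwise definition of addition in $C^I$ and $D^I$ together with the additivity of $F$ on each $\mathrm{Hom}_C(A^i,B^i)$. Nothing further is needed.
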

\begin{proof}
Let $f,g$ be morphisms $\{A^i\} \rightarrow \{B^i\}$ in $C^I$. Then
\[
F^I(f+g) = \{F(f^i + g^i)\} = \{F(f^i)\}+\{F(g^i)\} = F^I(f)+F^I(g).
\]
Similarly for the other conditions.
\end{proof}

\begin{lemma}
\label{fun'}
Suppose $C$ is a preadditive category. Then $\pi^i\colon C^I \rightarrow C$ is additive.
\end{lemma}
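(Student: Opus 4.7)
The plan is to unpack the definitions: $\pi^i$ sends a morphism $f\colon \{A^i\} \to \{B^i\}$ in $C^I$ to its $i$th component $f^i\colon A^i \to B^i$ in $C$, and we need to show this assignment respects the abelian group structure on $\mathrm{Hom}$-sets, i.e. that $\pi^i\colon \mathrm{Hom}_{C^I}(\{A^i\},\{B^i\}) \to \mathrm{Hom}_C(A^i,B^i)$ is a group homomorphism for every pair of objects.

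First I would recall from the previous lemma that the abelian group structure on $\mathrm{Hom}_{C^I}(\{A^i\},\{B^i\})$ is defined componentwise: $(f+g)^j = f^j + g^j$ for all $j \in I$. Applying $\pi^i$ then gives $\pi^i(f+g) = (f+g)^i = f^i + g^i = \pi^i(f) + \pi^i(g)$, which is exactly the additivity condition. The zero morphism and inverses are handled identically since they were also defined componentwise in the preceding lemma.

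There is no real obstacle here; the statement is essentially a tautology once the group operation on morphisms of $C^I$ has been set up componentwise, and the proof amounts to a single line. The only thing to be mindful of is to cite the componentwise definition of addition on $C^I$ rather than reprove it.
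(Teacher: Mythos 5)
Your proof is correct and coincides with the paper's own argument: both use the componentwise definition of addition on $\mathrm{Hom}_{C^I}$ from the preceding lemma to write $\pi^i(f+g) = (f+g)^i = f^i+g^i = \pi^i(f)+\pi^i(g)$. Nothing further is needed.
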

\begin{proof}
$\pi^i(f+g) = (f+g)^i = f^i+g^i = \pi^i(f)+\pi^i(g)$.
\end{proof}

From now on, we will assume that our categories $C$ and $D$ are abelian (see \cite[Appendix A.4]{Weibel} for definitions) -- note that abelian categories are \textit{a fortiori} preadditive, so the previous results apply. It is known that $C^I$ is abelian (e.g. see \cite[Functor Categories 1.6.4]{Weibel}). We want to show that exact sequences in $C^I$ are just sequences in $C^I$ which are exact at each component. To show this, we need a preliminary lemma.

\begin{lemma}
Suppose $A=(\{A^i\},\{\alpha^{ij}\}), B=(\{B^i\},\{\beta^{ij}\})$, and consider $f\colon A \rightarrow B$ in $C^I$.
\begin{enumerate}[(i)]
\item The kernel $\ker(f)$ is the object $(\{\ker(f^i)\},\{\gamma^{ij}\})$ together with the morphism $g\colon \ker(f) \rightarrow A$, where $g^i$ is the canonical map $\ker(f^i) \rightarrow A^i$ in $C$, and $\gamma^{ij}$ is the (unique) morphism $\ker(f^i) \rightarrow \ker(f^j)$ given by the universal property of $\ker(f^j)$ in the diagram
\[
\xymatrix{\ker(f^i) \ar[r]^{f^i} \ar@{-->}[d]_{\gamma^{ij}} & A^i \ar[d]^{\alpha^{ij}} \\
\ker(f^j) \ar[r]^{f^j} & A^j \rlap{.}}
\]
\item Similarly for $\coker(f)$.
\end{enumerate}
\end{lemma}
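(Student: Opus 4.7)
The plan is to verify part (i) by checking each ingredient of the universal property of a kernel in turn, and then to deduce (ii) by a direct duality argument. Throughout, write $g^i\colon \ker(f^i) \to A^i$ for the canonical monomorphism in $C$, which is the central tool: virtually every identity below is verified by post-composing with a $g^j$ and invoking its being monic.

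First I would verify that $(\{\ker(f^i)\}, \{\gamma^{ij}\})$ is a well-defined object of $C^I$. Since
\[
f^j (\alpha^{ij} g^i) = \beta^{ij} f^i g^i = 0,
\]
the universal property of $\ker(f^j)$ produces a unique $\gamma^{ij}\colon \ker(f^i) \to \ker(f^j)$ with $g^j \gamma^{ij} = \alpha^{ij} g^i$; this is what the diagram in the statement is encoding (with $g^i$, $g^j$ in place of the slightly misleading $f^i$, $f^j$ labels). Functoriality, i.e.\ $\gamma^{ii} = \mathrm{id}$ and $\gamma^{jk}\gamma^{ij} = \gamma^{ik}$, is then immediate by the uniqueness clause: each side composed with the monomorphism $g^{(\cdot)}$ agrees.

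Next I would observe that $g = \{g^i\}$ is a morphism $\ker(f) \to A$ in $C^I$, since the required naturality square $g^j \gamma^{ij} = \alpha^{ij} g^i$ is precisely the defining equation of $\gamma^{ij}$; clearly $fg = 0$ componentwise. For the universal property, suppose $h\colon K' \to A$ in $C^I$, with $K' = (\{K'^i\},\{\kappa^{ij}\})$, satisfies $fh=0$. Componentwise, $f^i h^i = 0$ yields a unique $h'^i\colon K'^i \to \ker(f^i)$ with $g^i h'^i = h^i$. To see $\{h'^i\}$ is a morphism in $C^I$, apply the monomorphism $g^j$ to both sides of $\gamma^{ij} h'^i = h'^j \kappa^{ij}$:
\[
g^j \gamma^{ij} h'^i = \alpha^{ij} g^i h'^i = \alpha^{ij} h^i = h^j \kappa^{ij} = g^j h'^j \kappa^{ij},
\]
so the identity holds. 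Uniqueness of $\{h'^i\}$ as a morphism in $C^I$ is inherited from componentwise uniqueness in $C$.

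Part (ii) then follows by dualising: since $(C^{op})^I \cong (C^I)^{op}$ (the opposite is taken pointwise, and $f$ and $f^{op}$ have the same underlying data at each component), applying (i) to $f^{op}$ in $C^{op}$ gives exactly the assertion about $\coker(f)$. No step is a genuine obstacle; the only point demanding care is that the transition maps $\gamma^{ij}$ of $\ker(f)$ are defined only implicitly, so essentially every verification routes through the monomorphism $g^j$ before any equality can be read off.
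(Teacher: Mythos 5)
Your proof is correct and follows essentially the same route as the paper's: componentwise kernels, the unique induced maps $k^i$, and verification of the naturality squares by post-composing with the monomorphism $g^j$ --- your key computation is line-for-line the one in the paper, and you additionally supply the well-definedness of the object $(\{\ker(f^i)\},\{\gamma^{ij}\})$, which the paper leaves to the reader, as well as correctly reading the mislabelled horizontal arrows in the diagram as the canonical maps $g^i$. The only quibble is in your duality argument for (ii): the opposite of $C^I$ is $(C^{op})^{I^{op}}$ rather than $(C^{op})^{I}$, though since objects of $I$ and $I^{op}$ coincide this does not affect the componentwise conclusion.
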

\begin{proof}
We will prove (i), and leave it to the reader to check that $\ker(f)$ really is an element of $C^I$, that $g$ really is a morphism, and that (ii) goes through in the same way.

It is clear that
\[
fg\colon \ker(f) \rightarrow B
\]
is the zero map, since (one may check) the zero element $0^I$ of $C^I$ is the element with all its components the zero element $0$ of $C$, with identity morphisms between them. Suppose we have a morphism
\[
h\colon E=(\{E^i\},\{\varepsilon^{ij}\}) \rightarrow A
\]
such that $fh=0$. By definition, to show that $(\{\ker(f^i)\},\{\gamma^{ij}\})$ is the kernel of $f$, we need to show that there is a unique
\[
k\colon E \rightarrow (\{\ker(f^i)\},\{\gamma^{ij}\})
\]
such that $h = gk$. Now for each $i \in I$, $f^i h^i = 0$ in $C$, so again by definition of the kernel there is some unique
\[
k^i\colon E^i \rightarrow \ker(f^i)
\]
such that $h^i = g^i k^i$. To show that $h$ factors through $k=\{k^i\}$, we just need to check that the squares
\[
\xymatrix{E^i \ar[r]^{k^i} \ar[d]_{\varepsilon^{ij}} & \ker(f^i) \ar[d]^{\gamma^{ij}} \\
E^j \ar[r]^{k^j} & \ker(f^j)}
\]
commute. Then uniqueness follows from uniqueness of the $k^i$.

To see this, note that
\[
g^j k^j \varepsilon^{ij} = h^j \varepsilon^{ij} = \alpha^{ij} h^i = \alpha^{ij} g^i k^i = g^j \gamma^{ij} k^i,
\]
so, as $g^j$ is monic, it follows that $k^j \varepsilon^{ij} = \gamma^{ij} k^i$.
\end{proof}

Now it is also known that, for a morphism $f$ in an abelian category, $\im(f) = \ker \circ \coker(f)$: see \cite[p.~425]{Weibel}. Therefore:

\begin{lemma}
\label{les}
Given a sequence $L \xrightarrow{f} M \xrightarrow{g} N$ in $C^I$ such that $gf=0$,
\begin{tabbing}
it is exact at $M$ \= iff the canonical map $\im(f) \rightarrow \ker(g)$ is an isomorphism \\
\> iff the canonical map $\im(f^i) \rightarrow \ker(g^i)$ is an isomorphism for all $i$ \\
\> iff the sequence $L^i \xrightarrow{f^i} M^i \xrightarrow{g^i} N^i$ is exact at $M^i$ for all $i$.
\end{tabbing}
\end{lemma}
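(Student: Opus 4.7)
The plan is to split the chain of iffs into three separate equivalences. The first (exactness at $M$ iff $\im(f) \to \ker(g)$ is an iso in $C^I$) and the third (componentwise isomorphism of $\im(f^i) \to \ker(g^i)$ iff componentwise exactness at $M^i$) are just the \emph{definition} of exactness in an abelian category, applied respectively to $C^I$ (abelian by the cited result of Weibel) and to $C$. There is nothing to do for those two links. All the substantive content is in the middle equivalence.

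For the middle step, I would first identify $\im(f)$ and $\ker(g)$ componentwise. The preceding lemma already shows that $\ker(g)$ has $i$th component $\ker(g^i)$, and analogously for $\coker$. Combining this with the identity $\im(\phi) = \ker \circ \coker(\phi)$, valid in any abelian category, yields that the $i$th component of $\im(f)$ is $\im(f^i)$, and moreover that the canonical factorisation $\im(f) \to \ker(g)$ in $C^I$ has $i$th component the canonical map $\im(f^i) \to \ker(g^i)$ in $C$ (this follows from the uniqueness clauses in the respective universal properties, so that applying $\pi^i$ to the canonical map in $C^I$ must agree with the canonical map in $C$).

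It then remains to observe the general fact that a morphism in $C^I$ is an isomorphism iff each of its components is an isomorphism in $C$. The forward direction is immediate from applying the additive functor $\pi^i$. For the reverse direction, given componentwise inverses of the components of a morphism $h$ in $C^I$, one verifies that these inverses assemble into a morphism of $C^I$ by conjugating the commuting squares that express $h$ being a morphism by those inverses. This is the only place any calculation is needed; the rest is unwinding definitions. The main obstacle, such as it is, is just keeping straight which universal property is being invoked in which category, so that the identification of the canonical map $\im(f) \to \ker(g)$ with its componentwise counterparts is genuinely correct rather than tautological.
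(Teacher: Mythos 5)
Your proposal is correct and follows essentially the same route as the paper, which states Lemma \ref{les} as an immediate consequence of the preceding componentwise kernel/cokernel lemma together with the identity $\im(f)=\ker\circ\coker(f)$. You simply spell out the details the paper leaves implicit (the componentwise identification of the canonical map and the fact that a morphism in $C^I$ is an isomorphism iff all its components are), and those details are all accurate.
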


\section{Homological \texorpdfstring{$\delta$}{delta}-functors}
\label{homdfun}

In view of the fact that, when $C$ is abelian, $C^I$ is, it makes sense to compare homological properties over the two. We will assume, for simplicity, that all functors are covariant; dual statements follow by duality.

The following definition is taken from \cite[Definition 2.1.1]{Weibel}. We say $F$ is a \emph{homological $\delta$-functor} $C \rightarrow D$ if we have a collection of additive functors $F_n\colon C \rightarrow D$ for $n \in \mathbb{Z}$ such that, for each short exact sequence $0 \rightarrow L \rightarrow M \rightarrow N \rightarrow 0$ in $C$, we have a morphism $\delta_n\colon F_n(N) \rightarrow F_{n-1}(L)$, satisfying the following conditions.
\begin{enumerate}[(i)]
\item The functors $F_n$ are $0$ for $n<0$.
\item For each short exact sequence as above, there is a long exact sequence
\[
\cdots F_{n+1}(N) \xrightarrow{\delta_{n+1}} F_n(L) \rightarrow F_n(M) \rightarrow F_n(N) \xrightarrow{\delta_n} F_{n-1}(L) \rightarrow \cdots.
\]
\item For a morphism of short exact sequences
\[
\xymatrix{0 \ar[r] & L \ar[r] \ar[d]_f & M \ar[r] \ar[d] & N \ar[r] \ar[d]_g & 0 \\
0 \ar[r] & L' \ar[r] & M' \ar[r] & N' \ar[r] & 0 \rlap{,}}
\]
we get commutative squares
\[
\xymatrix{F_n(N) \ar[r]^{\delta_n} \ar[d]_{F_n(g)} & F_{n-1}(L) \ar[d]^{F_{n-1}(f)} \\
F_n(N') \ar[r]^{\delta_n} & F_{n-1}(L') \rlap{.}}
\]
\end{enumerate}

A morphism of homological $\delta$-functors $F \rightarrow G$ is a collection of natural transformations $F_n \rightarrow G_n$ that commute with the $\delta_n$.

Suppose $F$ is a homological $\delta$-functor $C \rightarrow D$. Then, as in Lemma \ref{fun}, the exponent functor $F_n^I\colon C^I \rightarrow D^I$ is additive for each $n$. Given a short exact sequence $0 \rightarrow L \rightarrow M \rightarrow N \rightarrow 0$ in $C^I$, for each $i \in I$ we have a short exact sequence $0 \rightarrow L^i \rightarrow M^i \rightarrow N^i \rightarrow 0$ in $C$, and hence a map $\delta_n\colon F_n(N^i) \rightarrow F_{n-1}(L^i)$. To show that we have a map $\delta_n^I\colon F_n^I(N) \rightarrow F_{n-1}^I(L)$ whose components are the $\delta_n$, we need the commutativity of
\[
\xymatrix{F_n(N^i) \ar[r]^{\delta_n} \ar[d] & F_{n-1}(L^i) \ar[d] \\
F_n(N^j) \ar[r]^{\delta_n} & F_{n-1}(L^j) \rlap{.}}
\]
This holds by part (iii) of the definition of a $\delta$-functor.

\begin{proposition}
\label{F^I}
The functors $F_n^I$ together with the maps $\delta_n^I$ form a homological $\delta$-functor from $C^I$ to $D^I$, which we denote $F^I$. Moreover, given homological $\delta$-functors $F,G\colon C \rightarrow D$ and a morphism $\eta\colon F \rightarrow G$, we get a morphism $\eta^I\colon F^I \rightarrow G^I$.
\end{proposition}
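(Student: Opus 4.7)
The plan is to reduce every one of the axioms to a componentwise statement in $D$, using Lemma \ref{les} together with the fact that the original $F_n$ already form a $\delta$-functor on $C$. The bulk of the work needed to make the proposition possible has already been done just above the statement: the exponent functors $F_n^I$ are additive by Lemma \ref{fun}, and the previous paragraph constructed $\delta_n^I$ out of the componentwise connecting maps $\delta_n$, using axiom (iii) of the $\delta$-functor $F$ to check that the resulting family is compatible with the structure morphisms of $L$ and $N$ and therefore really does define a morphism in $D^I$. So what remains is to verify the three axioms for $(F_n^I, \delta_n^I)$ and then handle the morphism $\eta^I$.

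For axiom (i), $F_n = 0$ for $n<0$ immediately gives $F_n^I = 0$, because the zero functor sends every object of $C^I$ to the diagram with all components $0$ and all structure maps identities, i.e., to the zero object of $D^I$ identified in the proof of the preceding lemmas. For axiom (ii), consider a short exact sequence $0 \to L \to M \to N \to 0$ in $C^I$. By Lemma \ref{les} this sequence is exact at each component, so applying the $\delta$-functor $F$ componentwise produces long exact sequences
\[
\cdots \to F_{n+1}(N^i) \xrightarrow{\delta_{n+1}} F_n(L^i) \to F_n(M^i) \to F_n(N^i) \xrightarrow{\delta_n} F_{n-1}(L^i) \to \cdots
\]
in $D$, one for each $i \in I$. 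By construction the $i$th component of $\delta_n^I$ is $\delta_n$, so these fit together into a sequence in $D^I$. Applying Lemma \ref{les} in the other direction, componentwise exactness gives exactness in $D^I$.

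For axiom (iii), given a morphism of short exact sequences in $C^I$, restricting to the $i$th components yields a morphism of short exact sequences in $C$, for which axiom (iii) for $F$ produces a commuting square in $D$. Taking the $i$th component of the candidate square in $D^I$ recovers exactly this square, so the square in $D^I$ commutes componentwise, hence (by looking at morphisms in $D^I$ component by component) commutes. This completes the verification that $F^I$ is a homological $\delta$-functor.

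For the second assertion, given $\eta\colon F \to G$, apply Lemma \ref{nattr} to each $\eta_n$ to obtain natural transformations $\eta_n^I\colon F_n^I \to G_n^I$. It remains to check that the $\eta_n^I$ commute with the $\delta_n^I$, i.e., that for each short exact sequence in $C^I$ the evident square built from $\eta_n^I$ and $\delta_n^I$ commutes in $D^I$. This is again a componentwise check: on the $i$th component the square reduces to the corresponding commuting square for $\eta_n$ and $\delta_n$ arising from the short exact sequence of $i$th components, which holds because $\eta$ is a morphism of $\delta$-functors in the first place. I do not expect any serious obstacle here; the only point requiring care is the bookkeeping to confirm that Lemma \ref{les} is genuinely applicable in both directions, since the entire argument rests on translating between exactness in $C^I$ or $D^I$ and componentwise exactness.
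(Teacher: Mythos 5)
Your proof is correct and follows essentially the same route as the paper's: reduce each axiom and the compatibility of $\eta_n^I$ with $\delta_n^I$ to componentwise statements via Lemma \ref{les}, Lemma \ref{fun}, and Lemma \ref{nattr}, exactly as the paper does. No gaps.
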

\begin{proof}
It is clear that $F_n^I=0$ for $n<0$. For each short exact sequence $0 \rightarrow L \rightarrow M \rightarrow N \rightarrow 0$, the exactness of the $i$th component at each $i$ makes
\[
\cdots \rightarrow F_{n+1}^I(N) \xrightarrow{\delta_{n+1}^I} F_n^I(L) \rightarrow F_n^I(M) \rightarrow F_n^I(N) \xrightarrow{\delta_n^I} F_{n-1}^I(L) \rightarrow \cdots
\]
exact, by the results of Lemma \ref{les}.

Finally, suppose we have a morphism of short exact sequences from $0 \rightarrow L \rightarrow M \rightarrow N \rightarrow 0$ to $0 \rightarrow L' \rightarrow M' \rightarrow N' \rightarrow 0$. To show the commutativity of
\[
\xymatrix{F_n^I(N) \ar[r]^{\delta_n^I} \ar[d] & F_{n-1}^I(L) \ar[d] \\
F_n^I(N') \ar[r]^{\delta_n^I} & F_{n-1}^I(L') \rlap{,}}
\]
we just need the commutativity of
\[
\xymatrix{F_n(N^i) \ar[r]^{\delta_n} \ar[d] & F_{n-1}(L^i) \ar[d] \\
F_n(N'^i) \ar[r]^{\delta_n} & F_{n-1}(L'^i)}
\]
for each $i$: this holds by part (iii) of the definition of a $\delta$-functor.

By Lemma \ref{nattr}, the natural transformations $\eta_n$ give rise to natural transformations $\eta_n^I$, so we just need to check the commutativity of
\[
\xymatrix{F_n^I(N) \ar[r]^{\delta_n^I} \ar[d]_{\eta_{n,N}^I} & F_{n-1}^I(L) \ar[d]^{\eta_{n-1,L}^I} \\
G_n^I(N) \ar[r]^{\varepsilon_n^I} & G_{n-1}^I(L) \rlap{,}}
\]
which holds because each of its components commutes.
\end{proof}

\begin{proposition}
If $F=\{F_n\}$ is a homological $\delta$-functor $C \rightarrow D^I$, then so is $F^i=\{F_n^i\}\colon C \rightarrow D$ with the differentials $\delta_n^i$, where, for a short exact sequence
\[
0 \rightarrow L \rightarrow M \rightarrow N \rightarrow 0,
\]
$\delta_n^i\colon F_n^i(N) \rightarrow F_{n-1}^i(L)$ is just the $i$th component of $\delta_n\colon F_n(N) \rightarrow F_{n-1}(L)$. Moreover, for each morphism $i \rightarrow j$ in $I$, we get a morphism of $\delta$-functors $F^i \rightarrow F^j$.
\end{proposition}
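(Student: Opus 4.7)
The plan is to leverage the observation that $F_n^i = \pi^i \circ F_n$, so that $F^i$ is essentially $F$ postcomposed with the additive $i$th projection functor, and then verify the three axioms one at a time.

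First, since each $F_n$ is additive and each $\pi^i$ is additive by Lemma \ref{fun'}, their composite $F_n^i$ is additive. Vanishing in negative degrees is immediate: if $F_n = 0$ then $F_n^i = \pi^i \circ F_n = 0$ (the $i$th component of the zero functor is zero). For axiom (ii), given a short exact sequence $0 \to L \to M \to N \to 0$ in $C$, the associated long exact sequence of $F$ lives in $D^I$; by Lemma \ref{les}, exactness in $D^I$ is equivalent to exactness at every component, so in particular the $i$th component
\[
\cdots \to F_{n+1}^i(N) \xrightarrow{\delta_{n+1}^i} F_n^i(L) \to F_n^i(M) \to F_n^i(N) \xrightarrow{\delta_n^i} F_{n-1}^i(L) \to \cdots
\]
is exact in $D$. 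For axiom (iii), the defining commutative square for $F$ sits in $D^I$; applying the additive functor $\pi^i$ yields the desired commutative square in $D$, since $\pi^i(\delta_n) = \delta_n^i$ by construction.

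For the second claim, fix a morphism $i \to j$ in $I$. By Lemma \ref{nattr'} we have a natural transformation $\gamma^{ij}\colon \pi^i \to \pi^j$, and whiskering with $F_n$ gives, for each $n$, a natural transformation
\[
\gamma^{ij} F_n \colon F_n^i \longrightarrow F_n^j.
\]
It remains to verify that this family commutes with the connecting maps, i.e. that for every short exact sequence $0 \to L \to M \to N \to 0$ in $C$ the square
\[
\xymatrix{F_n^i(N) \ar[r]^{\delta_n^i} \ar[d]_{\gamma^{ij}_{F_n(N)}} & F_{n-1}^i(L) \ar[d]^{\gamma^{ij}_{F_{n-1}(L)}} \\
F_n^j(N) \ar[r]^{\delta_n^j} & F_{n-1}^j(L)}
\]
commutes in $D$. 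But this square is exactly the naturality square expressing that $\delta_n \colon F_n(N) \to F_{n-1}(L)$ is a morphism in $D^I$, evaluated on the morphism $i \to j$ of $I$; so the identity $\gamma^{ij}_{F_{n-1}(L)} \circ \delta_n^i = \delta_n^j \circ \gamma^{ij}_{F_n(N)}$ is built into the very definition of $\delta_n$ being a morphism in $D^I$.

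No step here is genuinely hard; the only point that requires attention is the last commutativity, where one must recognise the square not as a fresh condition to check but as an unpacking of what it means for $\delta_n$ to be a morphism in the functor category $D^I$. Once this is observed, the proof reduces to assembling the pieces supplied by Lemmas \ref{nattr'}, \ref{fun'}, and \ref{les}.
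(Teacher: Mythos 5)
Your proposal is correct and follows essentially the same route as the paper: additivity via $\pi^i$ (Lemma \ref{fun'}), componentwise exactness of the long exact sequence via Lemma \ref{les}, and the natural transformations $\gamma^{ij}$ of Lemma \ref{nattr'} for the morphism of $\delta$-functors. Your observation that the final square is precisely the condition that $\delta_n$ be a morphism in $D^I$ is a slightly more explicit unpacking of the paper's closing remark that ``this holds because $F$ is a homological $\delta$-functor,'' but it is the same argument.
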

\begin{proof}
By Lemma \ref{fun'}, each $F_n^i$ is an additive functor. By Lemma \ref{les}, taking the $i$th component of a long exact sequence
\[
\cdots F_{n+1}(N) \xrightarrow{\delta_{n+1}} F_n(L) \rightarrow F_n(M) \rightarrow F_n(N) \xrightarrow{\delta_n} F_{n-1}(L) \rightarrow \cdots
\]
in $D^I$ gives a long exact sequence
\[
\cdots F_{n+1}^i(N) \xrightarrow{\delta_{n+1}^i} F_n^i(L) \rightarrow F_n^i(M) \rightarrow F_n^i(N) \xrightarrow{\delta_n^i} F_{n-1}^i(L) \rightarrow \cdots
\]
in $D$. Also, taking the $i$th component of any commutative square
\[
\xymatrix{F_n(N) \ar[r]^{\delta_n} \ar[d] & F_{n-1}(L) \ar[d] \\
F_n(N') \ar[r]^{\delta_n} & F_{n-1}(L')}
\]
in $D^I$ gives a commutative square
\[
\xymatrix{F_n^i(N) \ar[r]^{\delta_n^i} \ar[d] & F_{n-1}^i(L) \ar[d] \\
F_n^i(N') \ar[r]^{\delta_n^i} & F_{n-1}^i(L')}
\]
in $D$. Hence each $G^i$ is a homological $\delta$-functor.

By Lemma \ref{nattr'}, we get natural transformations $\gamma_n^{ij}\colon F_n^i \rightarrow F_n^j$ for each $n$; for a morphism of $\delta$-functors, we need to show that the squares
\[
\xymatrix{F_n^i(N) \ar[r]^{\delta_n^i} \ar[d]_{\gamma_n^{ij}} & F_{n-1}^i(L) \ar[d]^{\gamma_{n-1}^{ij}} \\
F_n^j(N) \ar[r]^{\delta_n^j} & F_{n-1}^j(L)}
\]
commute. This holds because $F$ is a homological $\delta$-functor.
\end{proof}

\emph{Projective} objects $P$ in an abelian category $C$ are defined by the following universal property: for any epimorphism $f\colon M \rightarrow N$, and any morphism $g\colon P \rightarrow N$, there is a morphism $h\colon P \rightarrow M$ such that $g=fh$. We say $C$ has \emph{enough projectives} if for every object $A$ in $C$ there is an epimorphism $P \rightarrow A$ for some projective $P$. In that case, we can take a projective resolution of $A$: namely, a sequence $\cdots \rightarrow P_1 \rightarrow P_0 \rightarrow 0$ with every $P_n$ projective such that $\cdots \rightarrow P_1 \rightarrow P_0 \rightarrow A \rightarrow 0$ is exact.

In an abelian category $C$ with enough projectives, if we are given an additive functor $F$, we can define the \emph{left derived functors} of $F$, $L_nF$, in the following way: for $A \in C$, take a projective resolution $\cdots \rightarrow P_1 \rightarrow P_0 \rightarrow 0$ of $A$, and then $L_nF(A)$ is the $n$th homology group of the chain complex $\cdots \rightarrow F(P_1) \rightarrow F(P_0) \rightarrow 0$.

It is well known that each $L_nF(A)$ is well defined, and that the $L_nF$ form a homological $\delta$-functor: see \cite[Lemma 2.4.1, Theorem 2.4.6]{Weibel}.

A homological $\delta$-functor $F$ is called \emph{universal} if, given another homological $\delta$-functor $G$ and a natural transformation $t\colon G_0 \rightarrow F_0$, there is a unique morphism $T=\{t_n\}\colon G \rightarrow F$ such that $t_0 = t$ (\cite[Definition 2.1.4]{Weibel}). Suppose $C$ has enough projectives: then given an additive functor $F\colon C \rightarrow D$, the $L_nF$ are universal by \cite[Theorem 2.4.7]{Weibel}.

If the additive functor $F$ is right-exact, one can show that $F=L_0F$. When $F$ is not right-exact, it follows by universality that $L_nF$ is naturally isomorphic to $L_n(L_0F)$ for each $n$, so that we do not gain anything by considering the more general situation.

\begin{lemma}
Suppose we are given a right-exact additive functor $F\colon C \rightarrow D$, and $C$ and $C^I$ have enough projectives. Then there is a natural isomorphism $(L_nF)^I \rightarrow L_n(F^I)$, for each $n$, of functors $C^I \rightarrow D^I$, which gives an isomorphism of $\delta$-functors.
\end{lemma}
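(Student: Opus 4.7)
The plan is to reduce the problem to the universal property of left derived functors, using the pointwise nature of exactness in $C^I$.

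First I would observe that $F^I$ is right exact. If $L \to M \to N \to 0$ is exact in $C^I$, then by Lemma \ref{les} each $L^i \to M^i \to N^i \to 0$ is exact in $C$; right-exactness of $F$ yields exact $F(L^i) \to F(M^i) \to F(N^i) \to 0$ for every $i$, and Lemma \ref{les} again transports this to right exactness of $F^I(L) \to F^I(M) \to F^I(N) \to 0$. Hence $L_0(F^I) = F^I$. Since $F$ is itself right exact, $F = L_0 F$, so $(L_0 F)^I = F^I = L_0(F^I)$ canonically in degree zero.

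By Proposition \ref{F^I}, the family $\{(L_n F)^I\}$ assembles into a homological $\delta$-functor $C^I \to D^I$. Since $C^I$ has enough projectives, $\{L_n(F^I)\}$ is universal by \cite[Theorem 2.4.7]{Weibel}. Universality turns the identity at level zero into a unique morphism of $\delta$-functors $(L_n F)^I \to L_n(F^I)$ extending it.

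The main obstacle is showing this morphism is an isomorphism in positive degrees. I would argue by constructing, for each $A \in C^I$, a projective resolution $P_\bullet \to A$ in $C^I$ whose components $P_n^i$ are projective in $C$ for all $i$. The tool is the left adjoint $L_i\colon C \to C^I$ to the exact projection $\pi^i$, with $(L_i X)^j$ a coproduct of copies of $X$ indexed by $\operatorname{Hom}_I(i,j)$; exactness of $\pi^i$ (Lemma \ref{les}) forces $L_i$ to send projectives to componentwise projectives. Starting from projective covers $P^{(i)} \twoheadrightarrow A^i$ in $C$ and assembling $\bigoplus_i L_i(P^{(i)}) \twoheadrightarrow A$, then iterating on successive kernels, produces the desired resolution.

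For such a resolution, Lemma \ref{les} gives $L_n(F^I)(A)^i = H_n(F^I(P_\bullet))^i = H_n(F(P_\bullet^i))$, which equals $L_n F(A^i) = (L_n F)^I(A)^i$ since $P_\bullet^i \to A^i$ is a projective resolution of $A^i$ in $C$. The structural maps between components agree because both sides inherit them from $A$ through the resolution, giving a natural isomorphism of functors $C^I \to D^I$; the uniqueness clause of the universal property identifies it with the morphism of $\delta$-functors constructed above, so compatibility with the $\delta_n$ comes for free.
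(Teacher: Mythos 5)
Your proof follows the same skeleton as the paper's: identify $(L_0F)^I$ with $L_0(F^I)$ via right-exactness of $F^I$ (Lemma \ref{les}), extend this to a morphism of $\delta$-functors by the universal property of the left derived functors of $F^I$, and then show the two sides agree by computing $L_n(F^I)(A)$ from a resolution whose $i$th components form projective resolutions of the $A^i$. Where you genuinely diverge is in producing such a resolution: the paper asserts that every projective of $C^I$ already has projective components (deducing this, rather tersely, from the fact that components of epimorphisms are epimorphisms) and then uses an arbitrary projective resolution in $C^I$, whereas you build a specific componentwise-projective resolution from the left adjoints $L_i$ of the evaluation functors $\pi^i$. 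Your route avoids relying on the paper's underjustified claim about components of arbitrary projectives, which is a point in its favour.

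However, your construction smuggles in hypotheses the lemma does not grant. The formula $(L_iX)^j = \coprod_{\operatorname{Hom}_I(i,j)}X$ and the object $\bigoplus_{i\in\ob(I)}L_i(P^{(i)})$ require $C$ to admit coproducts indexed by the hom-sets of $I$ and by $\ob(I)$, which may be infinite; the lemma assumes only that $C$ is abelian with enough projectives, and separately that $C^I$ has enough projectives --- a hypothesis your argument never uses, since you are in effect re-proving it under an extra cocompleteness assumption. If $I$ has infinite hom-sets or infinitely many objects and $C$ lacks the corresponding coproducts, your resolution need not exist, so you should either add the cocompleteness hypothesis or revert to using a given projective resolution in $C^I$. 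Two smaller points: ``projective covers'' should just read ``epimorphisms from projectives''; and the closing appeal to uniqueness is slightly circular as phrased, since to invoke uniqueness you must first verify that your componentwise isomorphism commutes with the $\delta_n$, which is exactly what you hoped to get for free (though the paper is no more careful on this last step).
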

\begin{proof}
Note that, since $F$ is right-exact, by Lemma \ref{les} $F^I$ is right-exact. So $(L_0F)^I = L_0(F^I) = F^I$ and we get the identity $(L_0F)^I \rightarrow L_0(F^I)$. Now apply the universal property of left derived functors to extend this to a morphism of $\delta$-functors $(L_nF)^I \rightarrow L_n(F^I)$. Finally, note that each component of a projective in $C^I$ must be projective in $C$, since by Lemma \ref{les} each component of an epimorphism in $C^I$ must be an epimorphism in $C$. So by Lemma \ref{les} again a projective resolution of an object $A$ in $C^I$ gives projective resolutions in $C$ to each of its components $A^i$, and hence $(L_nF)^i(A)=L_nF(A^i)$, so $(L_nF)^I(A)=L_n(F^I)(A)$, as required.
\end{proof}

As a result of this, we will just write $L_nF^I$ for $(L_nF)^I$ in the case that $C$ has enough projectives, whether $C^I$ does or not.

We now give a standard result of homology: the Grothendieck spectral sequence. For the proof, see \cite[Theorem 5.8.3, Theorem 5.5.1]{Weibel}. Given a right-exact functor $F\colon C \rightarrow D$, where $C$ has enough projectives, an object $A$ of $C$ is called $F$-acyclic if $L_nF(A)=0$ for all $n>0$.

\begin{theorem}
\label{gss}
Suppose that $C$, $D$ and $E$ are abelian categories, and that $C$ and $D$ have enough projectives. Suppose we have right-exact functors $F\colon C \rightarrow D$ and $G\colon D \rightarrow E$ such that $F$ sends projective objects of $C$ to $G$-acyclic objects of $D$. Then there is a convergent first quadrant homology spectral sequence for each $A \in C$:
\[
E_{pq}^2 = (L_pG)(L_qF)(A) \Rightarrow L_{p+q}(GF)(A).
\]
Moreover the convergence is natural in the sense that, given a morphism $A \rightarrow B$, the induced map
\[
L_{p+q}(GF)(A) \rightarrow L_{p+q}(GF)(B)
\]
is compatible with the induced map of spectral sequences
\[
(L_pG)(L_qF)(A) \rightarrow (L_pG)(L_qF)(B).
\]
\end{theorem}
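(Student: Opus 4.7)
The plan is to adapt the classical Cartan--Eilenberg double-complex construction. Fix $A\in C$ and choose a projective resolution $P_\bullet\to A$ in $C$; by hypothesis each $FP_n$ is then $G$-acyclic in $D$. I would then resolve the chain complex $FP_\bullet$ by projectives of $D$ and exploit the two filtration spectral sequences of a suitable double complex.

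Concretely, I would construct a Cartan--Eilenberg resolution $Q_{\bullet,\bullet}\to FP_\bullet$: a first-quadrant double complex of projectives in $D$ such that each column $Q_{p,\bullet}$ is a projective resolution of $FP_p$, and such that the horizontal cycles, horizontal boundaries and horizontal homology each form, in the vertical direction, projective resolutions of the corresponding invariants of $FP_\bullet$. Existence uses only that $D$ has enough projectives; see \cite[Section 5.7]{Weibel}. Applying $G$ termwise and passing to the total complex $\operatorname{Tot}(GQ_{\bullet,\bullet})$, the two filtration spectral sequences abut to the same graded object, namely the homology of this total complex.

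The spectral sequence obtained by taking vertical homology first is where the acyclicity hypothesis bites: the column $GQ_{p,\bullet}$ computes $L_\bullet G(FP_p)$, which by $G$-acyclicity of $FP_p$ is concentrated in degree zero at $GFP_p$. Thus this spectral sequence collapses at $E^2$ to $H_\bullet(GFP_\bullet)=L_\bullet(GF)(A)$, identifying the abutment. For the other spectral sequence, the CE property makes the relevant short exact sequences of horizontal cycles, boundaries and the $Q_{p,q}$ themselves split degreewise, their quotients being projective; so $G$ preserves them, giving $H^h_p(GQ_{\bullet,q})=G(H^h_pQ_{\bullet,q})$. Since $H^h_p Q_{\bullet,\bullet}$ is, in the vertical direction, a projective resolution of $H_p(FP_\bullet)=L_pF(A)$, a further vertical homology computation yields an $E^2$-term of the form $L_\bullet G(L_\bullet F(A))$; with Weibel's conventions this is precisely $E^2_{pq}=(L_pG)(L_qF)(A)$.

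The main obstacle is the construction of the CE resolution together with the careful identification of the two $E^2$-pages; all the substantive content sits in verifying that the splittings produced by the CE property allow $G$ to pass through horizontal homology. Naturality in $A$ then follows by functoriality: a morphism $A\to B$ lifts to a chain map $P_\bullet\to P'_\bullet$, and by the comparison theorem for CE resolutions to a map of double complexes $Q_{\bullet,\bullet}\to Q'_{\bullet,\bullet}$, unique up to chain homotopy. Applying $G$ and totalising induces a morphism of spectral sequences compatible with the induced map on the abutments $L_{p+q}(GF)(A)\to L_{p+q}(GF)(B)$, as required.
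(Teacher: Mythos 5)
Your argument is correct and is precisely the standard Cartan--Eilenberg double-complex proof of the Grothendieck spectral sequence; the paper gives no independent argument but simply cites \cite[Theorems 5.5.1 and 5.8.3]{Weibel}, which is exactly the proof you have reproduced, including the handling of naturality via the comparison theorem for Cartan--Eilenberg resolutions.
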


\begin{corollary}
\label{gss'}
For $C,D,E,F,G$ as before, $I$ a small category and $A \in C^I$, there is a convergent first quadrant homology spectral sequence:
\[
E_{pq}^2(A) = (L_pG^I)(L_qF^I)(A) \Rightarrow L_{p+q}(GF)^I(A).
\]
Moreover the convergence is natural in the sense that, given a morphism $A \rightarrow B$, the induced map $L_{p+q}(GF)^I(A) \rightarrow L_{p+q}(GF)^I(B)$ is compatible with the induced map of spectral sequences $(L_pG^I)(L_qF^I)(A) \rightarrow (L_pG^I)(L_qF^I)(B)$.
\end{corollary}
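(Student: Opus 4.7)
The plan is to reduce to Theorem \ref{gss} componentwise: for each $i \in I$, apply the theorem to $A^i \in C$ to obtain a first quadrant spectral sequence
\[
E_{pq}^{2,i} = (L_pG)(L_qF)(A^i) \Rightarrow L_{p+q}(GF)(A^i),
\]
and then glue these together across $i$ into a spectral sequence in $E^I$. Under the convention that $L_nH^I$ stands for $(L_nH)^I$ and the fact that the exponent functors act componentwise, the $i$-th component of $(L_pG^I)(L_qF^I)(A)$ is $(L_pG)(L_qF)(A^i)$ and the $i$-th component of $L_{p+q}(GF)^I(A)$ is $L_{p+q}(GF)(A^i)$, so the objects that need to appear at each component are already present.

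To promote the pages $E_{pq}^r$ to objects of $E^I$ and the differentials $d^r$ to morphisms in $E^I$, I would use the naturality clause of Theorem \ref{gss}. For each morphism $i \rightarrow j$ in $I$, the structure map $\alpha^{ij}\colon A^i \rightarrow A^j$ of the diagram $A$ is a morphism in $C$, so it induces a morphism of the componentwise spectral sequences. This furnishes the diagram structure on each family $\{E_{pq}^{r,i}\}$ and commutes with the $d^r$; by Lemma \ref{les} the homology identifications $E_{pq}^{r+1} \cong \ker(d^r)/\im(d^r)$ can be checked componentwise, hence assemble into isomorphisms in $E^I$. Convergence and the abutment are handled the same way: the first quadrant filtration on $L_{p+q}(GF)(A^i)$ with associated graded $E_{pq}^{\infty,i}$ is natural in $A^i$, so the filtrations glue into a filtration of $L_{p+q}(GF)^I(A)$ in $E^I$ whose associated graded, computed componentwise by Lemma \ref{les}, is precisely $E_{pq}^{\infty}$.

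Naturality in $A$ then reduces to the naturality clause of Theorem \ref{gss} applied at each component: a morphism $f\colon A \rightarrow B$ in $C^I$ has components $f^i\colon A^i \rightarrow B^i$, each of which induces a map of the componentwise spectral sequences compatible with the abutment, and these glue into a map in $E^I$ by the same functoriality argument. The main obstacle is not any single hard step but the bookkeeping required to verify that the entire package of spectral sequence data (pages, differentials, homology identifications, filtrations, abutment, and their naturality) assembles coherently in $E^I$; this is routine given Lemma \ref{les}, which lets every exactness, kernel, image or homology statement in $E^I$ be checked one component at a time. An alternative route would be to apply Theorem \ref{gss} directly in $C^I$ via $F^I$ and $G^I$, but this would demand enough projectives in $C^I$ and $D^I$, which we are not assuming, so the componentwise reduction is the natural strategy.
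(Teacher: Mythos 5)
Your proposal is correct and follows essentially the same route as the paper: apply Theorem \ref{gss} componentwise to each $A^i$ and use the naturality clause (applied to the structure maps $A^i \rightarrow A^j$) to glue the componentwise spectral sequences and filtrations into data in $E^I$, with Lemma \ref{les} handling the componentwise exactness checks. The paper's own proof is just a terser version of this same argument.
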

\begin{proof}
We have that each morphism $A^i \rightarrow A^j$ in $C$ induces a morphism
\[
L_{p+q}(GF)(A^i) \rightarrow L_{p+q}(GF)(A^j)
\]
which is compatible with the induced morphisms
\[
(L_pG)(L_qF)(A^i) \rightarrow (L_pG)(L_qF)(A^j).
\]
In other words, giving each component $L_{p+q}(GF)(A^i)$ of $L_{p+q}(GF)^I(A)$ the filtration coming from applying Theorem \ref{gss} to $A^i$ gives a filtration on $L_{p+q}(GF)^I(A)$ whose factors are $(L_pG^I)(L_qF^I)(A)$, as they have $i$th component $(L_pG)(L_qF)(A^i)$. The second part is similar.
\end{proof}

All the results in Section \ref{homdfun} have duals coming from applying the results to opposite categories, since the opposite category of an abelian category is itself abelian. So we call the duals of projectives \emph{injectives}, the duals of left derived functors \emph{right derived functors}, we get that right derived functors are \emph{couniversal} (i.e. satisfying the property dual to being universal), and we get a spectral sequence using injectives instead of projectives.

\section{Homological Bifunctors}
\label{hombifun}

We now consider the case where $C$, $D$ and $E$ are abelian categories, $C$ and $D$ have enough projectives, and $F$ is a right-/right-exact additive bifunctor from $C \times D$ to $E$ (that is, $F$ is right-exact in both variables), covariant in both variables. It is not enough here to fix one variable and take derived functors in the other one: we need long exact sequences in one variable to commute with morphisms in the other. Again, the cases with $F$ contravariant or $C$ or $D$ having enough injectives are similar.

Following the construction of \cite[V.3]{C-E}, we can take left derived functors $F_n=L_nF\colon C \times D \rightarrow E$. The crucial result is \cite[Proposition V.4.1]{C-E}, and we give here a version of it, translated into covariance and left derived functors.

\begin{remark}
The proof in \cite{C-E} is for categories of modules. The general result follows directly from \cite[Freyd-Mitchell Embedding Theorem 1.6.1]{Weibel}.
\end{remark}

\begin{proposition}
\label{connect}
Suppose
\[
\xymatrix{0 \ar[r] & L \ar[r] \ar[d] & M \ar[r] \ar[d] & N \ar[r] \ar[d] & 0 \\
0 \ar[r] & L' \ar[r] & M' \ar[r] & N' \ar[r] & 0}
\]
is a morphism of short exact sequences in $C$, and $A \rightarrow B$ is a morphism in $D$. Then we have a commutative diagram
\small
\[
\xymatrix@C-11pt{\cdots \ar[r] & F_{n+1}(N,A) \ar[r]^{\delta_{n+1}} \ar[d] & F_n(L,A) \ar[r] \ar[d] & F_n(M,A) \ar[r] \ar[d] & F_n(N,A) \ar[r]^{\delta_n} \ar[d] & F_{n-1}(L,A) \ar[r] \ar[d] & \cdots \\
\cdots \ar[r] & F_{n+1}(N',B) \ar[r]^{\delta_{n+1}} & F_n(L',B) \ar[r] & F_n(M',B) \ar[r] & F_n(N',B) \ar[r]^{\delta_n} & F_{n-1}(L',B) \ar[r] & \cdots}
\]
\normalsize
whose rows are exact. Similarly with the variables switched.
\end{proposition}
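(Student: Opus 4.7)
The plan is to reduce the statement to the standard naturality properties of connecting homomorphisms for short exact sequences of chain complexes, using the construction of $L_n F$ via projective resolutions (following \cite{C-E}). To compute the two rows of the diagram in a compatible way, I would first invoke the horseshoe lemma to obtain a short exact sequence of projective resolutions
\[
0 \rightarrow P_\bullet \rightarrow Q_\bullet \rightarrow R_\bullet \rightarrow 0
\]
over $0 \rightarrow L \rightarrow M \rightarrow N \rightarrow 0$, and similarly $0 \rightarrow P'_\bullet \rightarrow Q'_\bullet \rightarrow R'_\bullet \rightarrow 0$ over $0 \rightarrow L' \rightarrow M' \rightarrow N' \rightarrow 0$; each of these is split in every degree. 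The given morphism of short exact sequences in $C$ lifts, by the comparison theorem, to a chain map between these two short exact sequences of resolutions.

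Now apply the bifunctor $F(-,A)$ to the top exact sequence of resolutions. Since it is split in each degree, $F$ carries it to a short exact sequence of chain complexes in $E$; taking homology yields the top long exact sequence of the diagram, with connecting maps coming from the snake lemma. The same recipe applied with $F(-,B)$ to the bottom resolution produces the bottom row. The morphism $A \rightarrow B$ in $D$ induces a natural transformation $F(-,A) \Rightarrow F(-,B)$ of bifunctors, which, combined with the chain map between the two resolutions of short exact sequences, gives a morphism of short exact sequences of chain complexes from the top datum to the bottom. Naturality of the snake lemma (i.e.\ of the connecting homomorphism in homology for morphisms of short exact sequences of complexes) then yields commutativity of every square in the asserted diagram.

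The squares that do not involve a $\delta$ are essentially bifunctoriality of $F_n$: each vertical map $F_n(X,A) \rightarrow F_n(X',B)$ factors as $F_n(X,A) \rightarrow F_n(X',A) \rightarrow F_n(X',B)$ (or in the other order), and horizontal $F_n$-maps commute with either factor by functoriality in the unmoved slot and by the naturality of the $\delta$-functor $L_nF(-,A)$ (respectively $L_nF(-,B)$) with respect to the morphism of short exact sequences in $C$. The squares that do involve $\delta_n$ reduce, after the same factorisation, to (a) naturality of $\delta_n$ in a morphism of short exact sequences with the second variable fixed, and (b) naturality of $\delta_n$ in $A \rightarrow B$ with the short exact sequence fixed; both pieces are instances of the same naturality of the snake lemma established above.

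The main obstacle is verifying (b), since it requires the connecting homomorphism obtained from the chosen projective resolution to be natural in the second variable independently of the resolution chosen. This is handled by working at the level of the resolutions themselves: the natural transformation $F(-,A) \Rightarrow F(-,B)$ is induced degreewise, so the comparison of snake connectors is automatic once the resolutions are fixed; any independence from choices then follows from the usual chain-homotopy uniqueness argument of the comparison theorem. As indicated in the preceding remark, if one prefers an entirely concrete calculation, the Freyd--Mitchell embedding reduces the whole verification to the classical one in a category of modules, which is precisely \cite[Proposition V.4.1]{C-E}.
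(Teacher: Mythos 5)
The paper does not actually prove this proposition: it is quoted from \cite[Proposition V.4.1]{C-E}, with the preceding remark observing that the module-category proof there extends to general abelian categories via the Freyd--Mitchell embedding. Your closing sentence recovers exactly that route, so in that sense your proposal contains the paper's argument. Your direct argument, however, has a genuine gap. In this proposition $F_n = L_nF$ is the Cartan--Eilenberg derived functor of the \emph{bifunctor}, constructed as in \cite[V.3]{C-E} by resolving \emph{both} variables and taking homology of the total complex of the double complex $F(P_p, Q_q)$. Your plan resolves only the first variable and takes homology of $F(R_\bullet, A)$, which computes the partial derived functor $L_n\bigl(F(-,A)\bigr)(N)$. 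These two constructions agree only under an extra hypothesis (e.g.\ $F(P,-)$ exact for $P$ projective, \cite[Theorem V.8.1]{C-E}), which is precisely the hypothesis the paper adds later for Proposition \ref{connect+} --- and indeed what you have written is essentially the paper's proof of the first half of Proposition \ref{connect+}, not of Proposition \ref{connect}.

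The repair is mechanical but necessary: also choose projective resolutions $A_\bullet \rightarrow A$ and $B_\bullet \rightarrow B$ in $D$, lift $A \rightarrow B$ to a chain map $A_\bullet \rightarrow B_\bullet$ by the comparison theorem, and run your argument on the total complexes of the double complexes $F(P_\bullet, A_\bullet)$, etc. The horseshoe sequence is still split in each bidegree, so applying $F(-,A_\bullet)$ and totalising yields a short exact sequence of complexes, and naturality of the snake lemma gives the commuting ladder as you describe. One further point to make explicit: lifting the given morphism of short exact sequences in $C$ to a morphism of the two horseshoe resolutions is not the bare comparison theorem but its refinement to short exact sequences of resolutions (cf.\ the proof of \cite[Theorem 2.4.6]{Weibel}); without it the middle vertical chain map need not respect the splittings degreewise, which is what you use to get a morphism of short exact sequences of complexes.
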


Note in addition that if, for all $A \in C$ projective, $F(A,-)\colon D \rightarrow E$ is exact, then the $F_n$ are naturally isomorphic to the functors $C \times D \rightarrow E$ given by fixing some $B \in D$ and then taking the derived functors $L_n (F(-,B))$, by \cite[Theorem V.8.1]{C-E}. Similarly with the variables switched. So in this case, we can calculate the derived functors of $F$ just by taking a projective resolution in one variable.

Now, given a small category $I$, we define $F_n^I\colon (C \times D)^I \rightarrow E^I$ to be the exponent of $F_n$ by $I$. We identify $C^I \times D$ with the full subcategory of $(C \times D)^I = C^I \times D^I$ consisting of objects whose $D^I$ component is a `constant' functor, that is, an element of $D^I$ all of whose components are the same, with identity morphisms between them. This gives an additive functor $F_n^I\colon C^I \times D \rightarrow E^I$. Repeating this construction, and identifying $(E^I)^J$ with $E^{I \times J}$, etc., we get a functor $F_n^{I \times J}\colon C^I \times D^J \rightarrow E^{I \times J}$: the main result of this section will be that the $F_n^{I \times J}$, together with the maps $\delta_n^{I \times J}$, satisfy conditions analogous to those of Proposition \ref{connect}. Note that, since $F$ is right-/right-exact, by Lemma \ref{les} $F^I$ is right-/right-exact.

\begin{proposition}
\label{connect'}
Suppose
\[
\xymatrix{0 \ar[r] & L \ar[r] \ar[d] & M \ar[r] \ar[d] & N \ar[r] \ar[d] & 0 \\
0 \ar[r] & L' \ar[r] & M' \ar[r] & N' \ar[r] & 0}
\]
is a morphism of short exact sequences in $C^I$, and suppose
\[
f\colon (\{A^i\},\{\alpha^{ij}\}) \rightarrow (\{B^i\},\{\beta^{ij}\})
\]
is a morphism in $D^J$. Then we have a commutative diagram
\footnotesize
\[
\xymatrix@C-14pt{\cdots \ar[r] & F_{n+1}^{I \times J}(N,A) \ar[r]^{\delta_{n+1}^{I \times J}} \ar[d] & F_n^{I \times J}(L,A) \ar[r] \ar[d] & F_n^{I \times J}(M,A) \ar[r] \ar[d] & F_n^{I \times J}(N,A) \ar[r]^{\delta_n^{I \times J}} \ar[d] & F_{n-1}^{I \times J}(L,A) \ar[r] \ar[d] & \cdots \\
\cdots \ar[r] & F_{n+1}^{I \times J}(N',B) \ar[r]^{\delta_{n+1}^{I \times J}} & F_n^{I \times J}(L',B) \ar[r] & F_n^{I \times J}(M',B) \ar[r] & F_n^{I \times J}(N',B) \ar[r]^{\delta_n^{I \times J}} & F_{n-1}^{I \times J}(L',B) \ar[r] & \cdots}
\]
\normalsize
whose rows are exact. Similarly with the variables switched.
\end{proposition}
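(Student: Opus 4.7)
The plan is to reduce the entire statement to its $(i,j)$-components in $E^{I \times J}$ and then invoke Proposition \ref{connect} pointwise, exactly as in the proofs in Section \ref{homdfun}. First I would unwind the definition: unravelling the exponent construction applied twice, $F_n^{I \times J}(X,Y)$ for $X \in C^I$, $Y \in D^J$ is the object of $E^{I \times J}$ whose $(i,j)$-component is $F_n(X^i, Y^j)$, with structure morphisms $F_n(X^{i \to i'}, Y^{j \to j'})$. In particular the $(i,j)$-component of the vertical arrow $F_n^{I \times J}(N,A) \to F_n^{I \times J}(N',B)$ induced by the given morphism of short exact sequences in $C^I$ and the morphism $f$ in $D^J$ is simply $F_n(N^i \to N'^i,\, A^j \to B^j)$.

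Next I would define the connecting morphism $\delta_n^{I \times J} \colon F_n^{I \times J}(N,A) \to F_{n-1}^{I \times J}(L,A)$ by declaring its $(i,j)$-component to be the connecting morphism $\delta_n \colon F_n(N^i, A^j) \to F_{n-1}(L^i, A^j)$ produced by Proposition \ref{connect} applied to the short exact sequence $0 \to L^i \to M^i \to N^i \to 0$ in $C$ (which is short exact by Lemma \ref{les}) with fixed second argument $A^j \in D$. To verify this family of maps assembles into a genuine morphism in $E^{I \times J}$, I need, for every morphism $i \to i'$ in $I$ and $j \to j'$ in $J$, the compatibility square with the structure morphisms of $F_n^{I \times J}(N,A)$ and $F_{n-1}^{I \times J}(L,A)$ to commute; but this is exactly the naturality square provided by Proposition \ref{connect} applied to the morphism of short exact sequences $\{L^i \to M^i \to N^i\} \to \{L^{i'} \to M^{i'} \to N^{i'}\}$ in $C$ together with the morphism $A^j \to A^{j'}$ in $D$.

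With $\delta_n^{I \times J}$ in hand, exactness of the rows in $E^{I \times J}$ is, by Lemma \ref{les}, equivalent to exactness of each $(i,j)$-component, and this component is precisely the top row of Proposition \ref{connect} applied to the $i$-component of the given morphism of short exact sequences in $C$ and to $A^j \in D$. Likewise commutativity of each vertical square in the displayed diagram can be checked on $(i,j)$-components, where it reduces to the commutativity guaranteed by Proposition \ref{connect} for the data $(L^i \to M^i \to N^i) \to (L'^i \to M'^i \to N'^i)$ in $C$ together with $f^j \colon A^j \to B^j$ in $D$. The statement "similarly with the variables switched" is handled by repeating the argument using the second half of Proposition \ref{connect}.

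The only real obstacle is the bookkeeping involved in the first two paragraphs --- checking that $\delta_n^{I \times J}$, defined componentwise, really is natural with respect to the structure morphisms of $N$ in $I$ and of $A$ in $J$ simultaneously --- since once this is done, Lemma \ref{les} reduces every remaining assertion to an instance of Proposition \ref{connect} in $E$. No new homological input is required beyond the results already established.
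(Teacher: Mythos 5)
Your proposal is correct and follows essentially the same route as the paper's own proof: define $\delta_n^{I\times J}$ componentwise, verify it is a morphism of $E^{I\times J}$ via the naturality squares of Proposition \ref{connect}, and then use Lemma \ref{les} to reduce exactness of the rows and commutativity of the vertical squares to componentwise instances of Proposition \ref{connect}. The only cosmetic difference is that the paper routes the row-exactness through Proposition \ref{F^I} (fixing $A^i$ first) rather than passing directly to $(i,j)$-components, but the substance is identical.
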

\begin{proof}
First we want to show that $\delta_n^{I \times J}$, the map $F_n^{I \times J}(N,A) \rightarrow F_{n-1}^{I \times J}(L,A)$ with components given by the usual map $\delta_n\colon F_n(N^k,A^i) \rightarrow F_{n-1}(L^k,A^i)$ coming from the sequence $0 \rightarrow L^k \rightarrow M^k \rightarrow N^k \rightarrow 0$, really is a morphism in $E^{I \times J}$. For this, we need the squares
\[
\xymatrix{F_n(N^k,A^i) \ar[r]^{\delta_n} \ar[d]_{F_n(N^k,\alpha^{ij})} & F_{n-1}(L^k,A^i) \ar[d]^{F_{n-1}(L^k,\alpha^{ij})} \\
F_n(N^l,A^i) \ar[r]^{\delta_n} & F_{n-1}(L^l,A^i)}
\]
to commute for all $i,j,k,l$. This follows immediately from Proposition \ref{connect}.

Now, by Proposition \ref{F^I}, for each fixed $A^i \in D$, the sequence
\begin{align*}
\cdots &\rightarrow F_{n+1}^I(N,A^i) \xrightarrow{\delta_{n+1}^J} F_n^I(L,A^i) \rightarrow F_n^I(M,A^i) \\
&\rightarrow F_n^I(N,A^i) \xrightarrow{\delta_n^J} F_{n-1}^I(L,A^i) \rightarrow \cdots
\end{align*}
is exact, and similarly for each $B^i$, so by Lemma \ref{les}, each row of our original diagram is exact.

We know that the second and third squares commute by the functoriality of $F_n^{I \times J}$. Finally, to show that
\[
\xymatrix{F_n^{I \times J}(N,A) \ar[r]^{\delta_n^{I \times J}} \ar[d] & F_{n-1}^{I \times J}(L,A) \ar[d] \\
F_n^{I \times J}(N',B) \ar[r]^{\delta_n^{I \times J}} & F_{n-1}^{I \times J}(L',B)}
\]
commutes, we just need
\[
\xymatrix{F_n(N^k,A^i) \ar[r]^{\delta_n} \ar[d] & F_{n-1}(L^k,A^i) \ar[d] \\
F_n(N'^k,B^i) \ar[r]^{\delta_n} & F_{n-1}(L'^k,B^i)}
\]
to commute for all $i,k$. This follows immediately from Proposition \ref{connect}.

The result with the variables switched follows by symmetry, after observing that $(F_n^I)^J=F_n^{I \times J}=(F_n^J)^I$.
\end{proof}

It will also be useful to consider the case where $C$ has enough projectives but $D$ does not, as for example in the case where $C=D$, with enough projectives but not enough injectives (or vice versa), $E=Ab^{op}$, and $F$ is $Hom_C(-,-^{op})^{op}\colon C \times C^{op} \rightarrow Ab^{op}$. In this case, we need an additional hypothesis: that our functor is exact in the second variable whenever we take the first to be projective. Note this is satisfied (by definition of projectivity) in the case of $Hom_C(-,-^{op})^{op}$.

We now assume that $C$, $D$ and $E$ are abelian categories, $C$ has enough projectives, and $F$ is a right-/right-exact additive bifunctor from $C \times D$ to $E$. In particular, for each $A \in D$, we can take the left derived functors of $F(-,A)$ to get a universal homological $\delta$-functor. We will write $F_n(-,A)$ for $L_n(F(-,A))$; the next proposition will show that the $F_n(-,-)$ are in fact bifunctors $C \times D \rightarrow E$.

\begin{proposition}
\label{connect+}
Suppose
\[
\xymatrix{0 \ar[r] & L \ar[r] \ar[d] & M \ar[r] \ar[d] & N \ar[r] \ar[d] & 0 \\
0 \ar[r] & L' \ar[r] & M' \ar[r] & N' \ar[r] & 0}
\]
is a morphism of short exact sequences in $C$, and $A \rightarrow B$ is a morphism in $D$.

Then we have a commutative diagram
\small
\[
\xymatrix@C-11pt{\cdots \ar[r] & F_{n+1}(N,A) \ar[r]^{\delta_{n+1}} \ar[d] & F_n(L,A) \ar[r] \ar[d] & F_n(M,A) \ar[r] \ar[d] & F_n(N,A) \ar[r]^{\delta_n} \ar[d] & F_{n-1}(L,A) \ar[r] \ar[d] & \cdots \\
\cdots \ar[r] & F_{n+1}(N',B) \ar[r]^{\delta_{n+1}} & F_n(L',B) \ar[r] & F_n(M',B) \ar[r] & F_n(N',B) \ar[r]^{\delta_n} & F_{n-1}(L',B) \ar[r] & \cdots}
\]
\normalsize
whose rows are exact, as in Proposition \ref{connect}. Suppose in addition that $F(P,-)$ is exact, for all $P \in C$ projective. Then the proposition holds with the variables switched, i.e. if
\[
\xymatrix{0 \ar[r] & L \ar[r] \ar[d] & M \ar[r] \ar[d] & N \ar[r] \ar[d] & 0 \\
0 \ar[r] & L' \ar[r] & M' \ar[r] & N' \ar[r] & 0}
\]
is a morphism of short exact sequences in $D$, and $f\colon A \rightarrow B$ is a morphism in $C$, then we get a commutative diagram
\small
\[
\xymatrix@C-11pt{\cdots \ar[r] & F_{n+1}(A,N) \ar[r]^{\delta_{n+1}} \ar[d] & F_n(A,L) \ar[r] \ar[d] & F_n(A,M) \ar[r] \ar[d] & F_n(A,N) \ar[r]^{\delta_n} \ar[d] & F_{n-1}(A,L) \ar[r] \ar[d] & \cdots \\
\cdots \ar[r] & F_{n+1}(B,N') \ar[r]^{\delta_{n+1}} & F_n(B,L') \ar[r] & F_n(B,M') \ar[r] & F_n(B,N') \ar[r]^{\delta_n} & F_{n-1}(B,L') \ar[r] & \cdots \rlap{.}}
\]
\normalsize
\end{proposition}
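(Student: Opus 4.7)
The plan is to treat the two variables asymmetrically, since projectives live only in $C$: the first half uses projective resolutions in $C$ in the usual way, while the second half leverages the exactness hypothesis to turn a short exact sequence in $D$ into a short exact sequence of chain complexes via a projective resolution in $C$. For the first diagram, each row is just the long exact sequence of the universal homological $\delta$-functor $L_*(F(-,A))$ (respectively $L_*(F(-,B))$) applied to the given short exact sequence in $C$. A morphism $A \rightarrow B$ in $D$ induces a natural transformation $F(-,A) \rightarrow F(-,B)$ of right-exact functors, and universality of $L_*(F(-,B))$ lifts it uniquely to a morphism of $\delta$-functors $L_*(F(-,A)) \rightarrow L_*(F(-,B))$. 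This supplies every vertical map of the form $F_n(X,A) \rightarrow F_n(X,B)$ and automatically forces every square containing $\delta_n$ to commute. Composing vertically with the action of $F_n$ on the morphism of short exact sequences in $C$ assembles the full first diagram; the remaining (non-$\delta$) squares commute by functoriality of $F_n(-,A)$ combined with naturality of the morphism of $\delta$-functors just constructed.

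For the second diagram, fix $A \in C$ and choose a projective resolution $P_* \rightarrow A$, so that $F_n(A,X) = H_n(F(P_*,X))$ for every $X \in D$. Under the new hypothesis each $F(P_k,-)$ is exact, so applying $F(P_*,-)$ termwise to $0 \rightarrow L \rightarrow M \rightarrow N \rightarrow 0$ produces a short exact sequence of chain complexes in $E$; its associated long exact sequence in homology is exactly the top row of the second diagram, with connecting map $\delta_n$ defined in this way. Repeating on the primed sequence gives the bottom row. This construction simultaneously endows $F_n(A,-)$ with the structure of a $\delta$-functor in the second variable and defines the $\delta_n$ appearing horizontally; standard chain-homotopy equivalence of projective resolutions in $C$ shows that the result is independent of the choice of $P_*$.

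For vertical naturality in the direction of $f\colon A \rightarrow B$, pick a projective resolution $Q_* \rightarrow B$ in $C$ and lift $f$ to a chain map $\tilde{f}\colon P_* \rightarrow Q_*$, unique up to chain homotopy. Combined with the morphism of short exact sequences in $D$, $\tilde{f}$ induces a morphism between the two short exact sequences of chain complexes $0 \rightarrow F(P_*,L) \rightarrow F(P_*,M) \rightarrow F(P_*,N) \rightarrow 0$ and $0 \rightarrow F(Q_*,L') \rightarrow F(Q_*,M') \rightarrow F(Q_*,N') \rightarrow 0$; naturality of the long exact sequence in homology with respect to morphisms of short exact sequences of chain complexes then delivers all the vertical commutativity, including the squares containing $\delta_n$. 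The main obstacle is the second diagram: because $D$ lacks projectives, $F_n(A,-)$ has no native definition in the second variable, so one must manufacture its $\delta$-functor structure through a resolution in the first variable and then check that the $\delta_n$ obtained this way, together with its naturality in $A$, is genuinely canonical. Everything else is standard homological bookkeeping.
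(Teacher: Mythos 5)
Your proof is correct. The second half coincides with the paper's argument: resolve $A$ and $B$ by projectives in $C$, lift $f$ to a chain map via the Comparison Theorem, use the hypothesis that $F(P,-)$ is exact to turn the short exact sequence in $D$ into a short exact sequence of complexes, and apply naturality of the long exact homology sequence. For the first half you take a genuinely different route. The paper resolves $L$ and $N$, uses the Horseshoe Lemma to build a compatible resolution of $M$ with termwise split rows, applies $F(-,A)$ and $F(-,B)$, and gets the whole diagram from naturality of the long exact sequence of a short exact sequence of complexes (\cite[Proposition 1.3.4]{Weibel}) --- the same template as the second half. You instead treat $L_\ast(F(-,A))$ as a homological $\delta$-functor to handle the morphism of short exact sequences at fixed $A$, and invoke universality of $L_\ast(F(-,B))$ to lift $F(-,A)\rightarrow F(-,B)$ to a morphism of $\delta$-functors handling the $A\rightarrow B$ direction, then stack the two squares. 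This is valid and shorter on the page, at the cost of outsourcing the chain-level work to \cite[Theorems 2.4.6, 2.4.7]{Weibel}, whose proofs contain essentially the Horseshoe-Lemma computation the paper writes out; the paper's version is more self-contained and transfers verbatim to the functor-category setting in Proposition \ref{connect+'}. One point worth a sentence in your write-up: the vertical maps of the first diagram are composites, and the two orders of composition (change the $C$-variable first or the $D$-variable first) agree precisely by naturality of the transformation $L_nF(-,A)\rightarrow L_nF(-,B)$; you allude to this but it is the hinge on which the stacking argument turns.
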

\begin{proof}
In the first case, we can take projective resolutions $L_\ast$ of $L$ and $N_\ast$ of $N$; by the Horseshoe Lemma (\cite[Lemma 2.2.8]{Weibel}), we can construct a projective resolution $M_\ast$ of $M$ whose $n$th term is $L_n \oplus N_n$. So we get a commutative diagram of chain complexes
\[
\xymatrix{0 \ar[r] & F(L_\ast,A) \ar[r] \ar[d] & F(M_\ast,A) \ar[r] \ar[d] & F(N_\ast,A) \ar[r] \ar[d] & 0 \\
0 \ar[r] & F(L'_\ast,B) \ar[r] & F(M'_\ast,B) \ar[r] & F(N'_\ast,B) \ar[r] & 0}
\]
whose rows are exact. Then the result follows by \cite[Proposition 1.3.4]{Weibel}. In the second case, take projective resolutions $A_\ast$ of $A$ and $B_\ast$ of $B$. By the Comparison Theorem (\cite[Theorem 2.2.6]{Weibel}), $f$ induces a unique map of chain complexes $f_\ast\colon A_\ast \rightarrow B_\ast$. Thus we get a commutative diagram of chain complexes
\[
\xymatrix{0 \ar[r] & F(A_\ast,L) \ar[r] \ar[d] & F(A_\ast,M) \ar[r] \ar[d] & F(A_\ast,N) \ar[r] \ar[d] & 0 \\
0 \ar[r] & F(B_\ast,L') \ar[r] & F(B_\ast,M') \ar[r] & F(B_\ast,N') \ar[r] & 0}
\]
whose rows are exact by hypothesis (since $F(P,-)$ is exact for $P$ projective), and the result follows as before.
\end{proof}

\begin{proposition}
\label{connect+'}
Suppose
\[
\xymatrix{0 \ar[r] & L \ar[r] \ar[d] & M \ar[r] \ar[d] & N \ar[r] \ar[d] & 0 \\
0 \ar[r] & L' \ar[r] & M' \ar[r] & N' \ar[r] & 0}
\]
is a morphism of short exact sequences in $C^I$, and $A \rightarrow B$ is a morphism in $D^J$.

Then we have a commutative diagram
\footnotesize
\[
\xymatrix@C-14pt{\cdots \ar[r] & F_{n+1}^{I \times J}(N,A) \ar[r]^{\delta_{n+1}^{I \times J}} \ar[d] & F_n^{I \times J}(L,A) \ar[r] \ar[d] & F_n^{I \times J}(M,A) \ar[r] \ar[d] & F_n^{I \times J}(N,A) \ar[r]^{\delta_n^{I \times J}} \ar[d] & F_{n-1}^{I \times J}(L,A) \ar[r] \ar[d] & \cdots \\
\cdots \ar[r] & F_{n+1}^{I \times J}(N',B) \ar[r]^{\delta_{n+1}^{I \times J}} & F_n^{I \times J}(L',B) \ar[r] & F_n^{I \times J}(M',B) \ar[r] & F_n^{I \times J}(N',B) \ar[r]^{\delta_n^{I \times J}} & F_{n-1}^{I \times J}(L',B) \ar[r] & \cdots}
\]
\normalsize
whose rows are exact, as in Proposition \ref{connect'}. Suppose in addition that $F(P,-)$ is exact, for all $P \in C$ projective. Then the proposition holds with the variables switched, i.e. if
\[
\xymatrix{0 \ar[r] & L \ar[r] \ar[d] & M \ar[r] \ar[d] & N \ar[r] \ar[d] & 0 \\
0 \ar[r] & L' \ar[r] & M' \ar[r] & N' \ar[r] & 0}
\]
is a morphism of short exact sequences in $D^J$, and $A \rightarrow B$ is a morphism in $C^I$, then we get a commutative diagram
\footnotesize
\[
\xymatrix@C-14pt{\cdots \ar[r] & F_{n+1}^{I \times J}(A,N) \ar[r]^{\delta_{n+1}^{I \times J}} \ar[d] & F_n^{I \times J}(A,L) \ar[r] \ar[d] & F_n^{I \times J}(A,M) \ar[r] \ar[d] & F_n^{I \times J}(A,N) \ar[r]^{\delta_n^{I \times J}} \ar[d] & F_{n-1}^{I \times J}(A,L) \ar[r] \ar[d] & \cdots \\
\cdots \ar[r] & F_{n+1}^{I \times J}(B,N') \ar[r]^{\delta_{n+1}^{I \times J}} & F_n^{I \times J}(B,L') \ar[r] & F_n^{I \times J}(B,M') \ar[r] & F_n^{I \times J}(B,N') \ar[r]^{\delta_n^{I \times J}} & F_{n-1}^{I \times J}(B,L') \ar[r] & \cdots \rlap{.}}
\]
\normalsize
\end{proposition}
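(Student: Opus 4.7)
The plan is to follow the proof of Proposition \ref{connect'} almost verbatim, with Proposition \ref{connect+} playing the role that Proposition \ref{connect} played there. The key underlying observation is that for each fixed object $B^j$ of $D$, the family $F_n(-,B^j) = L_n F(-,B^j)\colon C \to E$ is a homological $\delta$-functor on $C$ (which makes sense since $C$ has enough projectives and $F(-,B^j)$ is right-exact), and hence by Proposition \ref{F^I} its $I$-exponent is a $\delta$-functor $C^I \to E^I$.

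For the first part of the statement, I would first check that $\delta_n^{I \times J}$ is a well-defined morphism in $E^{I \times J}$ by verifying the required naturality squares componentwise in the indices from $I$ and $J$; this is exactly the content of the first half of Proposition \ref{connect+}. Exactness of each row of the diagram, after passing to the $(k,j)$-component via Lemma \ref{les}, reduces to the exactness of the long exact sequence obtained by applying the $\delta$-functor $F_n(-,B^j)^I$ to the given short exact sequence in $C^I$, which is guaranteed by Proposition \ref{F^I}. The middle squares commute by the functoriality of $F_n^{I \times J}$, while the two outer squares (those involving $\delta_n^{I \times J}$) commute because componentwise they are precisely instances of the commutative diagram provided by the first half of Proposition \ref{connect+}.

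The second, variables-switched, part cannot be obtained by the symmetry argument used at the end of the proof of Proposition \ref{connect'}, because we no longer assume that $D$ has enough projectives, so the $F_n$ are not visibly symmetric in their two arguments. Instead, the additional hypothesis that $F(P,-)$ is exact for every projective $P \in C$ is exactly what is needed so that the second half of Proposition \ref{connect+} applies: it tells us that for each fixed $A^k \in C$, the family $F_n(A^k,-)\colon D \to E$ is a homological $\delta$-functor, and that the resulting long exact sequences are natural in $A^k$. With this in hand the argument proceeds as in the first part, with the roles of the two variables interchanged: apply Proposition \ref{F^I} to the $J$-exponent of $F_n(A^k,-)$ to get exactness of each row via Lemma \ref{les}, and check the morphism property of $\delta_n^{I \times J}$ together with the commutativity of the outer squares by reducing componentwise to the second half of Proposition \ref{connect+}.

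I do not expect any real obstacle beyond careful bookkeeping of which indices range over $I$ and which over $J$, and over which variable (in $C$ or $D$) the derived functor is being computed at each stage. The single nontrivial homological input is Proposition \ref{connect+}; everything else is an application of Proposition \ref{F^I} and Lemma \ref{les} to lift componentwise statements up to $E^{I \times J}$.
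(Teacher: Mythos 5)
Your proposal is correct and follows essentially the same route as the paper, whose proof is just the one-line remark that the result follows from Proposition \ref{connect+} exactly as Proposition \ref{connect'} follows from Proposition \ref{connect}. Your additional observation that the variables-switched part cannot be obtained by the symmetry argument used at the end of Proposition \ref{connect'} (since $D$ need not have enough projectives) and must instead invoke the second half of Proposition \ref{connect+} directly is a worthwhile point that the paper's terse proof leaves implicit.
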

\begin{proof}
This follows from Proposition \ref{connect+} in the same way that Proposition \ref{connect} follows from Proposition \ref{connect'}; one just needs to check the commutativity of certain squares, which are immediate consequences of it.
\end{proof}


\begin{thebibliography}{9}

\bibitem{C-E}
Cartan, H, Eilenberg, S: \emph{Homological algebra}, Princeton University Press, Princeton, 1956.

\bibitem{Myself2}
Corob Cook, G: Bieri-Eckmann criteria for profinite groups, preprint (2014).

\bibitem{risingc}
Murfet, D: Abelian categories, \emph{The Rising Sea} (2006), available at \url{http://therisingsea.org/notes/AbelianCategories.pdf}. 

\bibitem{Weibel}
Weibel, C: \emph{An introduction to homological algebra} (Cambridge Studies in Advanced Mathematics, 38), Cambridge University Press, Cambridge, 1994.

\end{thebibliography}
\end{document}